\begin{document}
	
	\newtheorem{thm}{Theorem}[section]
	\newtheorem{lem}[thm]{Lemma}
	\newtheorem{con}[thm]{Conjecture}
	\newtheorem{cons}[thm]{Construction}
	\newtheorem{prop}[thm]{Proposition}
	\newtheorem{cor}[thm]{Corollary}
	\newtheorem{claim}[thm]{Claim}
	\newtheorem{obs}[thm]{Observation}
	\newtheorem{que}[thm]{Question}
	\newtheorem{defn}[thm]{Definition}
	\newtheorem{example}[thm]{Example}
	\newcommand{\di}{\displaystyle}
	\def\dfc{\mathrm{def}}
	\def\cF{{\cal F}}
	\def\cH{{\cal H}}
	\def\cK{{\cal K}}
	\def\cM{{\cal M}}
	\def\cA{{\cal A}}
	\def\cB{{\cal B}}
	\def\cG{{\cal G}}
	\def\cP{{\cal P}}
	\def\cC{{\cal C}}
	\def\ap{\alpha'}
	\def\Frk{F_k^{2r+1}}
	\def\nul{\varnothing} 
	\def\st{\colon\,}  
	\def\MAP#1#2#3{#1\colon\,#2\to#3}
	\def\VEC#1#2#3{#1_{#2},\ldots,#1_{#3}}
	\def\VECOP#1#2#3#4{#1_{#2}#4\cdots #4 #1_{#3}}
	\def\SE#1#2#3{\sum_{#1=#2}^{#3}}  \def\SGE#1#2{\sum_{#1\ge#2}}
	\def\PE#1#2#3{\prod_{#1=#2}^{#3}} \def\PGE#1#2{\prod_{#1\ge#2}}
	\def\UE#1#2#3{\bigcup_{#1=#2}^{#3}}
	\def\FR#1#2{\frac{#1}{#2}}
	\def\FL#1{\left\lfloor{#1}\right\rfloor} 
	\def\CL#1{\left\lceil{#1}\right\rceil}

	\title{The average connectivity matrix of a graph}
	\author{
		Linh Nguyen\thanks{Department of Applied Mathematics and Statistics, State University of New York, Stony Brook, NY 11794-3600,
			USA, linh.nguyen.1@stonybrook.edu}\,
		Suil O\thanks{Department of Applied Mathematics and Statistics, The State University of New York, Korea, Incheon, 21985, suil.o@sunykorea.ac.kr. Research supported by NRF-2020R1F1A1A01048226, NRF-2021K2A9A2A06044515, and NRF-2021K2A9A2A1110161711}\, 
	}
	
	\maketitle
	
	\begin{abstract}
		For a graph $G$ and for two distinct vertices $u$ and $v$, let $\kappa(u,v)$ be the maximum number of vertex-disjoint paths joining $u$ and $v$ in $G$.
		The average connectivity matrix of an $n$-vertex connected graph $G$, written $A_{\overline{\kappa}}(G)$, is an $n\times n$ matrix whose $(u,v)$-entry is $\kappa(u,v)/{n \choose 2}$ and let $\rho(A_{\overline{\kappa}}(G))$ be the spectral radius of $A_{\overline{\kappa}}(G)$. In this paper, we investigate some spectral properties of the matrix. In particular, we prove that for any $n$-vertex connected graph $G$,
		we have $\rho(A_{\overline{\kappa}}(G)) \le \frac{4\alpha'(G)}n$, which implies a result of Kim and O~\cite{KO} stating that for any connected graph $G$, we have $\overline{\kappa}(G) \le 2 \alpha'(G)$, where $\overline{\kappa}(G)=\sum_{u,v \in V(G)}\frac{\kappa(u,v)}{{n\choose 2}}$ and $\alpha'(G)$ is the maximum size of a matching in $G$; equality holds only when $G$ is a complete graph with an odd number of vertices. Also, for bipartite graphs, we improve the bound, namely $\rho(A_{\overline{\kappa}}(G)) \le \frac{(n-\alpha'(G))(4\alpha'(G) - 2)}{n(n-1)}$, and equality in the bound holds only when $G$ is a complete balanced bipartite graph.
	\end{abstract}
	
	\smallskip

	{\bf MSC}: 05C50, 05C40, 05C70
	
	{\bf Key words}: Average connectivity matrix, average connectivity, matchings, eigenvalues, spectral radius
	
	\section {Introduction}
	In this paper, we handle a simple, finite, and undirected graph.
	To check how well a graph $G$ is connected, we normally compute the \emph{connectivity} of $G$, written $\kappa(G)$, which is the minimum number of vertices $S$
	such that $G-S$ is disconnected or trivial.
	However, since this value is based on a worst-case situation, 
	Beineke, Oellermann, and Pippert~\cite{BOP} introduced a parameter to reflect a global amount of connectivity. The \emph{average connectivity} of $G$ is
	defined to be the number $\overline{\kappa}(G) = \sum_{u,v \in V(G)}\frac{\kappa(u,v)}{{n \choose 2}}$, where $\kappa(u,v)$ is the minimum number of vertices whose deletion makes $v$ unreachable from $u$. By Menger's Theorem, $\kappa(u,v)$ equals the maximum number of internally disjoint paths between $u$ and $v$.
	For convenience, we may assume that $\kappa(v,v)=0$ for every vertex $v \in V(G)$.
	
	The \emph{average connectivity matrix} of a graph $G$ with the vertex set $V(G)=\{v_1,\ldots,v_n\}$, written $A_{\overline{\kappa}}(G)$, is an $n\times n$ matrix whose $(i,j)$-entry is $\kappa(v_i,v_j)/{n\choose 2}$. For a vertex $v \in V(G)$, let the \emph{transmission of $v$}, written $T(v)$, be $\frac 1{{n\choose 2}}\sum_{w \in V(G)}\kappa(v,w)$ and let the \emph{transmission of $G$}, written $T(G)$, be $\max_{v \in V(G)} T(v)$. Note that for an $n$-vertex tree $T_n$, since there is a unique path between any $v_i$ and $v_j$, we have 
	${n\choose 2} A_{\overline{\kappa}}(T_n)=A(K_n) = J_n - I_n$,
	where $A(G)$ is the adjacency matrix of a graph $G$, $J_n$ is the $n\times n$ all-1 matrix, and $I_n$ is the $n\times n$ identity matrix.
	Note that $2\overline{\kappa}(G)= \sum_{v \in V(G)}T(v)$.
	
	\begin{obs}\label{obs}
		If $G$ is a connected graph with $n$ vertices, then for $v\in V(G)$, we have $\frac{2\kappa(G)}n \le T(v) \le \frac{2d(v)}n.$ 
	\end{obs}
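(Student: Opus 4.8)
\medskip
\noindent\textbf{Proof proposal.} The plan is to estimate each summand of $T(v)=\frac{1}{\binom n2}\sum_{w\in V(G)}\kappa(v,w)$ separately. Since $\kappa(v,v)=0$ by convention, the sum effectively ranges over the $n-1$ vertices $w\neq v$, so it suffices to prove that $\kappa(G)\le\kappa(v,w)\le d(v)$ for every such $w$: summing these $n-1$ inequalities gives $\frac{(n-1)\kappa(G)}{\binom n2}\le T(v)\le\frac{(n-1)d(v)}{\binom n2}$, and since $\binom n2=\frac{n(n-1)}2$ this is exactly $\frac{2\kappa(G)}n\le T(v)\le\frac{2d(v)}n$. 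So the whole statement reduces to the two pointwise bounds, plus routine arithmetic.

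For the upper bound on a single term I will use the elementary observation that any family of internally disjoint $v$--$w$ paths must leave $v$ through pairwise distinct edges incident to $v$, so such a family has at most $d(v)$ members; hence $\kappa(v,w)\le d(v)$ for every $w\neq v$. For the lower bound, the point is that a connected graph $G$ is $\kappa(G)$-connected, and by the global (fan) form of Menger's theorem a $k$-connected graph contains at least $k$ internally disjoint paths between every pair of distinct vertices; hence $\kappa(v,w)\ge\kappa(G)$.

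The one place that needs a word of care --- and the only real obstacle --- is that the familiar ``minimum vertex cut'' form of Menger's theorem is stated for \emph{non-adjacent} vertices, whereas here $\kappa(v,w)$ is also defined for adjacent pairs. I plan to handle adjacent pairs either by invoking directly the fan version of Menger (which needs no non-adjacency hypothesis), or, for a more self-contained route, by observing that $\kappa_G(v,w)=1+\kappa_{G-vw}(v,w)$, combining this with $\kappa(G-vw)\ge\kappa(G)-1$ and applying the non-adjacent case inside $G-vw$; the degenerate subcase $\kappa(G-vw)=0$ is checked by hand, since then $vw$ is a bridge and $\kappa(G)=1=\kappa_G(v,w)$. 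Everything else is bookkeeping.
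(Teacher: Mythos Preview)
Your proposal is correct and follows exactly the same route as the paper: establish the pointwise inequalities $\kappa(G)\le\kappa(v,w)\le d(v)$ for each $w\neq v$ and then sum over the $n-1$ choices of $w$. The paper simply asserts these two inequalities without further comment, whereas you supply the (standard) justifications, including the careful treatment of adjacent pairs via the fan version of Menger or the edge-deletion trick; this extra care is fine but not strictly required at the level of detail the paper adopts.
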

	
	\begin{proof} For any pair of distinct vertices $u$ and $v$, we have $\kappa(G) \le \kappa(v,u) \le d(v)$, which implies
		that $(n-1)\kappa(G) \le {n\choose 2} T(v) \le (n-1)d(v)$.
	\end{proof}
	Equalities in the bounds in Observation~\ref{obs} hold for complete graphs. For an $n$-vertex tree, equality in the lower bound holds, but for the center vertex $v$ in an $n$-vertex star, there is a big gap between $T(v)$ and $2d(v)/n$.
	
	A connected graph $G$ is {\it uniformly $r$-connected} 
	if for any pair of non-adjacent vertices $u$ and $v$, we have $\kappa(u,v)=r$. Note that if $G$ is uniformly $r$-connected, then the average connectivity matrix has constant row sum equal to $2r/n$.
	
	Since $A_{\overline{\kappa}}(G)$ is real and symmetric, all of its eigenvalues are real. Thus we can index the eigenvalues, say $\lambda_1(A_{\overline{\kappa}}(G)),\ldots,\lambda_n(A_{\overline{\kappa}}(G))$, in non-increasing order.  For simplicity, we let $\lambda_i(A_{\overline{\kappa}}(G))=\lambda_i$, and 
	let $\rho(A_{\overline{\kappa}}(G))=\max\{|\lambda_i|:1\le i \le n\}$. If we let $\rho(A_{\overline{\kappa}}(G))=\rho$, then by Theorem~\ref{PF}, we have $\lambda_1=\rho$.
	We prove some relationships between the spectral radius and some quantities of the graph.
	
	\begin{thm} \label{basic}
		For an $n$-vertex connected graph $G$, we have $\frac{2\overline{\kappa}(G)}n \le \rho\le T(G)$.
		Equalities hold only when $G$ is uniformly connected.
	\end{thm}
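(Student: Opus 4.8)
The plan is to view $A:=A_{\overline{\kappa}}(G)$ simply as a nonnegative symmetric matrix whose $i$-th row sum equals $T(v_i)$, and to apply the two standard estimates for the Perron root of such a matrix. Since $G$ is connected, $\kappa(v_i,v_j)\ge 1$ for all $i\ne j$, so $A$ has strictly positive off-diagonal entries and is in particular irreducible; by Theorem~\ref{PF}, $\rho=\lambda_1$ is a simple eigenvalue with a positive eigenvector $\mathbf{x}=(x_1,\dots,x_n)^{T}$ satisfying $A\mathbf{x}=\rho\mathbf{x}$. For the upper bound, choose an index $k$ with $x_k=\max_i x_i$; then $\rho x_k=(A\mathbf{x})_k=\sum_j A_{kj}x_j\le\bigl(\sum_j A_{kj}\bigr)x_k=T(v_k)\,x_k\le T(G)\,x_k$, and dividing by $x_k>0$ gives $\rho\le T(G)$. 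For the lower bound, evaluating the Rayleigh quotient at the all-ones vector $\mathbf{1}$ yields $\rho=\lambda_1\ge\frac{\mathbf{1}^{T}A\mathbf{1}}{\mathbf{1}^{T}\mathbf{1}}=\frac1n\sum_{i,j}A_{ij}=\frac1n\sum_{v\in V(G)}T(v)=\frac{2\overline{\kappa}(G)}{n}$, the last equality being the identity $\sum_{v}T(v)=2\overline{\kappa}(G)$ recorded before the theorem.

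For the equality cases I would argue as follows. If $\rho=2\overline{\kappa}(G)/n$, then $\mathbf{1}$ attains the Rayleigh quotient, hence is a $\lambda_1$-eigenvector; since $\lambda_1$ is simple, $\mathbf{1}$ is a scalar multiple of $\mathbf{x}$, so $A\mathbf{1}=\rho\mathbf{1}$, i.e.\ $T(v)=\rho$ for every $v$ (and then $T(G)=\rho$ as well). If instead $\rho=T(G)$, then equality must hold throughout the chain of inequalities above, which forces $\sum_j A_{kj}x_j=\bigl(\sum_j A_{kj}\bigr)x_k$, and hence $x_j=x_k$ for every $j$ with $A_{kj}>0$; propagating this equality along nonzero entries and using the irreducibility of $A$, the vector $\mathbf{x}$ is constant, so again $A\mathbf{1}=\rho\mathbf{1}$ and all row sums of $A_{\overline{\kappa}}(G)$ coincide. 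Thus equality in either inequality is equivalent to $A_{\overline{\kappa}}(G)$ having constant row sums.

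It then remains to match the spectral condition ``$A_{\overline{\kappa}}(G)$ has constant row sums'' with the graph condition ``$G$ is uniformly connected.'' One implication is exactly the remark preceding the theorem; the converse — that having all vertex transmissions equal already forces $\kappa(u,v)$ to be constant on the relevant pairs — is the genuinely combinatorial step, and the point I expect to require real care. Here I would exploit the elementary bounds $\kappa(G)\le\kappa(u,v)\le\min\{d(u),d(v)\}$ together with Menger's theorem to rule out an ``averaged-out'' but non-uniform connectivity profile keeping every transmission equal. Everything else in the argument is routine Perron–Frobenius bookkeeping.
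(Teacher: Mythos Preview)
Your proof is correct and follows essentially the same route as the paper's: Perron--Frobenius plus the maximum-entry trick for the upper bound, and the Rayleigh quotient at $\mathbf{1}$ for the lower bound, with simplicity of $\lambda_1$ handling the equality cases. The combinatorial converse you flag at the end (that constant row sums force $G$ to be uniformly connected) is not actually proved in the paper either---it is simply asserted---so your identification of this as the one step requiring genuine care is on point rather than a gap relative to the original.
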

	
	\begin{proof} By Theorem~\ref{PF}, there exists a unique positive eigenvector $\bold{x}$ corresponding to $\rho$. Let $x_i=\max_{j=1,\ldots,n} x_j$. Then we have
		$$\rho x_i=\sum_{j=1}^n(A_{\overline{\kappa}}(G))_{ij}x_j\le x_i\sum_{j=1}^n(A_{\overline{\kappa}}(G))_{ij} = T(v_i)x_i \le T(G)x_i,$$
		which implies that $\rho \le T(G)$. Equality holds only when $G$ is a uniformly connected graph.
		
		For the lower bound, by Theorem~\ref{RR}, we have 
		$$ \frac{2 \overline{\kappa}(G)}n =  \frac{{\bf 1}^TA_{\overline{\kappa}}(G){\bf 1}}{{\bf 1}^T{\bf 1}} \le \rho.$$
		Equality holds only when $\bold{x}=c{\bf 1}$ for some constant $c$, i.e. $G$ is uniformly connected.
	\end{proof}
	
	A \emph{matching} in a graph $G$ is a set of disjoint edges in it.
	The \emph{matching number} of a graph $G$, written $\alpha'(G)$,
	is the maximum size of a matching in it.
	Kim and O~\cite{KO} gave a relation between the matching number and the average connectivity of $G$.
	
	\begin{thm}{\rm \cite{KO}}\label{KO}
		For a connected graph $G$, we have
		$\overline{\kappa}(G) \le 2\alpha'(G).$
	\end{thm}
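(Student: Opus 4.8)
The plan is to route the bound through the number of edges of $G$ (one could alternatively recover it from Theorem~\ref{basic} once the paper's spectral bound is in hand, but a direct argument is shorter). First I would control each entry of the matrix: by Menger's theorem $\kappa(u,v)$ is the maximum number of internally disjoint $u$--$v$ paths, and since distinct such paths leave $u$ (and enter $v$) along distinct edges, $\kappa(u,v)\le\min\{d(u),d(v)\}\le\tfrac12\big(d(u)+d(v)\big)$ for every unordered pair $\{u,v\}$. Summing over all ${n\choose 2}$ pairs and using that each vertex lies in $n-1$ of them, $\sum_{\{u,v\}}\kappa(u,v)\le\tfrac12\sum_{\{u,v\}}\big(d(u)+d(v)\big)=\tfrac{n-1}{2}\sum_{v\in V(G)}d(v)=(n-1)\,|E(G)|$, hence $\overline{\kappa}(G)=\frac{1}{{n\choose 2}}\sum_{\{u,v\}}\kappa(u,v)\le\frac{2\,|E(G)|}{n}$.

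It then remains to prove the extremal inequality $|E(G)|\le\alpha'(G)\,n$. The cleanest route is to invoke the edge form of the Erd\H{o}s--Gallai theorem, $|E(G)|\le\max\big\{{2s+1\choose 2},\ {s\choose 2}+s(n-s)\big\}$ with $s=\alpha'(G)$, and to check (using $n\ge 2s$) that both quantities are at most $sn$. Alternatively I would argue directly: fix a maximum matching $M=\{x_1y_1,\ldots,x_sy_s\}$ and set $U=V(M)$, $W=V(G)\setminus U$; then $W$ is independent, and the absence of an $M$-augmenting path forbids, for each $i$, the simultaneous presence of edges $x_iw$ and $y_iw'$ with $w\ne w'$ in $W$. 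Splitting on $|W|$ one gets $|E(G)|=|E(G[U])|+|E(U,W)|\le{2s\choose 2}+\max\{2s,\,s(n-2s)\}\le sn$, and one sees that equality forces $|W|=1$ and $G[U]=K_{2s}$, i.e. $G=K_{2s+1}$.

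Combining the two parts gives $\overline{\kappa}(G)\le\frac{2\,|E(G)|}{n}\le 2\alpha'(G)$. For the equality claim, equality in the first chain forces $\kappa(u,v)=\tfrac12(d(u)+d(v))$ for every pair, so $G$ is regular with $\kappa(u,v)=d(u)$ throughout; equality in the second forces $|E(G)|=\alpha'(G)\,n$, and tracing the Erd\H{o}s--Gallai equality case (or the direct case analysis) this pins $G$ down to $K_n$ with $n$ odd, which indeed attains equality. I expect the real obstacle to be this last step: the inequality $|E(G)|\le\alpha'(G)\,n$ is routine on its own, but one must also verify that no slack is lost anywhere in the soft estimate $\kappa(u,v)\le\tfrac12(d(u)+d(v))$, i.e. that the odd complete graph is the unique extremal configuration.
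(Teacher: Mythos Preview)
Your argument is correct and follows a genuinely different route from the paper's. The paper obtains Theorem~\ref{KO} as a by-product of its main spectral result: it first proves $\rho(A_{\overline{\kappa}}(G))\le \frac{4\alpha'(G)}{n}$ (Theorem~\ref{main}) through a lengthy analysis using the Berge--Tutte formula, quotient matrices of the join graphs $K_s\vee(K_{n_1}\cup\cdots\cup K_{n_q})$, and substantial polynomial computations to pin down the extremal structures, and then combines this with the easy Rayleigh-quotient bound $\frac{2\overline{\kappa}(G)}{n}\le\rho$ of Theorem~\ref{basic}. You instead bypass the spectrum entirely: the estimate $\overline{\kappa}(G)\le\frac{2|E(G)|}{n}$ (which is just Observation~\ref{obs} summed over $v$) followed by the Erd\H{o}s--Gallai--type bound $|E(G)|\le\alpha'(G)\,n$ gives the inequality in a few lines, and your augmenting-path case analysis on $|W|$ recovers the equality case $G=K_n$ with $n$ odd directly. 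Your approach is far shorter and more elementary for the stated inequality; what it does not yield is the stronger conclusion $\rho(A_{\overline{\kappa}}(G))\le\frac{4\alpha'(G)}{n}$, which is the paper's actual contribution and does not follow from an edge-count argument.
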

	
	In Section 3, we prove an upper bound for the spectral radius of $A_{\overline{\kappa}}(G)$ in terms of the matching number and the number of vertices.
	
	\begin{thm}\label{main}
		For an $n$-vertex connected graph $G$, we have $$\rho(A_{\overline{\kappa}}(G)) \le \frac{4\alpha'(G)}n.$$
		Equality holds only when $G=K_{n}$ and $n$ is odd.
	\end{thm}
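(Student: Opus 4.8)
The plan is to reduce the spectral bound to a purely combinatorial inequality about transmissions and then prove that inequality from Menger's theorem together with the Erd\H{o}s--Gallai theorem on matchings. Since $A_{\overline{\kappa}}(G)$ is a nonnegative irreducible symmetric matrix, Theorem~\ref{basic} already gives $\rho(A_{\overline{\kappa}}(G))\le T(G)=\max_{v\in V(G)}T(v)$, so it suffices to establish the pointwise bound $T(v)\le 4\alpha'(G)/n$ for every vertex $v$; and since $\binom n2\,T(v)=\sum_{u\neq v}\kappa(v,u)$, this is equivalent to
\begin{equation}\label{eq:keybd}
\sum_{u\neq v}\kappa(v,u)\ \le\ 2\,\alpha'(G)\,(n-1)\qquad\text{for every }v\in V(G).
\end{equation}
I would note in passing that \eqref{eq:keybd}, summed over $v$, recovers $2\overline{\kappa}(G)=\sum_v T(v)\le n\cdot\frac{4\alpha'(G)}{n}=4\alpha'(G)$, and that in any case the spectral inequality we are proving implies Theorem~\ref{KO} via the lower bound $\frac{2\overline{\kappa}(G)}{n}\le\rho$ of Theorem~\ref{basic}.

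To prove \eqref{eq:keybd} I would fix $v$ and argue by contradiction, assuming $\sum_{u\neq v}\kappa(v,u)>2\alpha'(G)(n-1)$. By Menger's theorem the $\kappa(v,u)$ internally disjoint $v$--$u$ paths leave $v$ through distinct edges and enter $u$ through distinct edges, so $\kappa(v,u)\le\min\{d(v),d(u)\}$ for every $u\neq v$. Applying $\kappa(v,u)\le d(v)$ to all $n-1$ terms gives $d(v)>2\alpha'(G)$; applying $\kappa(v,u)\le d(u)$ instead gives $2|E(G)|-d(v)>2\alpha'(G)(n-1)$, hence $|E(G)|>\alpha'(G)(n-1)+\tfrac12 d(v)>\alpha'(G)\,n$. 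But the Erd\H{o}s--Gallai theorem bounds the number of edges of an $n$-vertex graph with matching number $\alpha'(G)$ by $\max\big\{\binom{2\alpha'(G)+1}{2},\ \binom{\alpha'(G)}{2}+\alpha'(G)(n-\alpha'(G))\big\}$; the second term equals $\alpha'(G)n-\tfrac12\alpha'(G)(\alpha'(G)+1)<\alpha'(G)n$, and the first equals $\alpha'(G)\big(2\alpha'(G)+1\big)$, which is $\le\alpha'(G)n$ whenever $n\ge 2\alpha'(G)+1$. Since always $n\ge 2\alpha'(G)$, this leaves only $n=2\alpha'(G)$, where $d(v)>2\alpha'(G)=n$ is impossible. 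Either way we reach a contradiction, so \eqref{eq:keybd} holds and $\rho(A_{\overline{\kappa}}(G))\le T(G)\le\frac{4\alpha'(G)}{n}$.

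For the equality statement, suppose $\rho(A_{\overline{\kappa}}(G))=\frac{4\alpha'(G)}{n}$; then $T(G)=\frac{4\alpha'(G)}{n}$, so some vertex $v^{\ast}$ attains equality in \eqref{eq:keybd}. Rerunning the argument with equalities, $\kappa(v^{\ast},u)\le d(v^{\ast})$ forces $d(v^{\ast})\ge 2\alpha'(G)$ while $\kappa(v^{\ast},u)\le d(u)$ forces $|E(G)|\ge\alpha'(G)\,n$; the Erd\H{o}s--Gallai bound then forces $n=2\alpha'(G)+1$ (the alternative $n=2\alpha'(G)$ being excluded by $d(v^{\ast})\ge n$), whence $|E(G)|=\alpha'(G)\,n=\binom{2\alpha'(G)+1}{2}=\binom n2$ and $G=K_n$ with $n$ odd. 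The converse is a one-line check: for odd $n$ one has $A_{\overline{\kappa}}(K_n)=\tfrac2n(J_n-I_n)$, of spectral radius $\tfrac{2(n-1)}{n}=\tfrac{4\alpha'(K_n)}{n}$. The only genuinely nontrivial step is \eqref{eq:keybd}: the crude estimate $\kappa(v,u)\le d(v)$ alone is too weak, since the maximum degree of $G$ can exceed $2\alpha'(G)$ (already for the claw $K_{1,3}$ and the star $K_{1,n-1}$), and the idea is to play the two degree bounds $\kappa(v,u)\le d(v)$ and $\kappa(v,u)\le d(u)$ off against the Erd\H{o}s--Gallai edge count; I expect the boundary case $n=2\alpha'(G)$ and the tightening required for the equality analysis to be the fiddly parts.
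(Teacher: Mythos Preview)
Your argument is correct, and it is genuinely different from (and considerably shorter than) the paper's proof. The paper proceeds structurally: using the Berge--Tutte formula it identifies a set $S$ with large deficiency, passes to the join $K_s\vee(K_{n_1}\cup\cdots\cup K_{n_q})$, shows via quotient-matrix and Perron-vector comparisons that moving vertices into the largest component increases $\rho$, and then carries out lengthy polynomial computations to compare the characteristic polynomials of the two candidate extremal graphs $G_1=K_1\vee(K_{n-t-1}\cup\overline{K_t})$ and $G_2=K_{(n-t)/2}\vee\overline{K_{(n+t)/2}}$ against the target $2(n-t)/n$. You bypass all of this by proving the stronger pointwise statement $T(v)\le 4\alpha'(G)/n$ for every vertex, using only $\kappa(v,u)\le\min\{d(v),d(u)\}$ and the Erd\H{o}s--Gallai edge bound $|E(G)|\le\max\{\binom{2\alpha'+1}{2},\binom{\alpha'}{2}+\alpha'(n-\alpha')\}$; the two degree estimates, played against each other, force $|E(G)|>\alpha'(G)\,n$ under the contradiction hypothesis, and the edge bound kills every case except $n=2\alpha'$, which dies on $d(v)>n-1$. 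Your equality analysis is also clean: squeezing $|E|\ge\alpha' n$ against Erd\H{o}s--Gallai forces $n=2\alpha'+1$ and $|E|=\binom{n}{2}$.

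What each approach buys: yours is elementary, short, and actually proves more than the theorem asks (a uniform transmission bound rather than a spectral-radius bound); the paper's approach, while much heavier, additionally identifies the extremal graphs $G_1$ and $G_2$ that maximise $\rho(A_{\overline{\kappa}}(\cdot))$ among $n$-vertex connected graphs with prescribed matching number, information your argument does not yield.
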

	
	If Theorem~\ref{main} is true, then by Theorem~\ref{basic}, we have $\frac{2\overline{\kappa}(G)}n \le \rho \le \frac{4\alpha'(G)}n$, which implies Theorem~\ref{KO}. To prove Theorem \ref{main}, we find the maximum spectral radius among $n$-vertex connected graphs with a given matching number. However, this idea does not derive the relation between $\overline{\kappa}(G)$ and matching number $\alpha'(G)$ in connected bipartite graphs by Kim and O (see theorem 2.3 in \cite{KO}). Nevertheless, we also prove an upper bound for the spectral radius of $A_{\overline{\kappa}}(G)$ in $n$-vertex connected bipartite graphs to guarantee a certain matching number in Section 4.
	
	\begin{thm}\label{bipartitethm}
		For an $n$-vertex connected bipartite graph $G$, we have $$\rho(A_{\overline{\kappa}}(G)) \le \frac{(n-\alpha'(G))(4\alpha'(G) - 2)}{n(n-1)}.$$
		Equality holds only when $G=K_{n/2,n/2}$.
	\end{thm}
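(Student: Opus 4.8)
The plan is to deduce the bound from Theorem~\ref{basic} by controlling the transmission. Since $\rho(A_{\overline{\kappa}}(G))\le T(G)=\max_{v}\frac1{{n\choose2}}\sum_{w\in V(G)}\kappa(v,w)$ and $\frac{(n-\alpha'(G))(4\alpha'(G)-2)}{n(n-1)}=\frac{(n-\alpha'(G))(2\alpha'(G)-1)}{{n\choose2}}$, the whole statement reduces to the following purely combinatorial claim: writing $\alpha'=\alpha'(G)$, for \emph{every} vertex $v$,
$$\sum_{w\in V(G)}\kappa(v,w)\le (n-\alpha')(2\alpha'-1).$$
Note that for bipartite graphs this ``row-sum'' quantity $T(G)$ is exactly the claimed bound (e.g.\ in $K_{2,4}$ it is attained while $\rho$ is strictly smaller), so proving the claim suffices.

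To prove the claim I would first fix the bipartite structure: with bipartition $V(G)=A\cup B$ one has $\alpha'\le\min(|A|,|B|)$, and by König's theorem there is a vertex cover $C$ with $|C|=\alpha'$, so $I:=V(G)\setminus C$ is an independent set with $|I|=n-\alpha'$. Then I isolate two elementary inequalities. \emph{First}, for any two distinct vertices $v,w$ we have $\kappa(v,w)\le d(v)$, and $N(v)$ lies in the part not containing $v$, whose size is at most $n-\alpha'$ (since $\alpha'\le\min(|A|,|B|)$); hence $\kappa(v,w)\le n-\alpha'$ for all pairs. \emph{Second}, every $w\in I$ satisfies $N(w)\subseteq C$, so $d(w)\le|C|=\alpha'$ and therefore $\kappa(v,w)\le d(w)\le\alpha'$ for all $v\neq w$.

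Now I split on whether $v\in C$ or $v\in I$. If $v\in C$, apply the second bound to the $n-\alpha'$ vertices of $I$ and the first bound to the $\alpha'-1$ vertices of $C\setminus\{v\}$:
$$\sum_{w\neq v}\kappa(v,w)\le (n-\alpha')\,\alpha'+(\alpha'-1)(n-\alpha')=(n-\alpha')(2\alpha'-1).$$
If $v\in I$, then also $N(v)\subseteq C$, so $d(v)\le\alpha'$ and $\kappa(v,w)\le\alpha'$ for \emph{all} $w$, giving $\sum_{w\neq v}\kappa(v,w)\le(n-1)\alpha'$; and $(n-\alpha')(2\alpha'-1)-(n-1)\alpha'=(\alpha'-1)(n-2\alpha')\ge0$ because $1\le\alpha'\le n/2$, so the bound holds in this case too. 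Combining with Theorem~\ref{basic} proves the inequality. The only mild subtlety here is the case $v\in I$, where the crude uniform estimate $\kappa(v,w)\le\alpha'$ (not the $C/I$ split) is what is needed.

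For equality I would trace the chain backwards: equality forces equality in $\rho\le T(G)$, hence $G$ uniformly connected by Theorem~\ref{basic}, together with equality in the claim at the transmission-maximizing vertex $v$ — which when $v\in C$ means $\kappa(v,w)=\alpha'$ for all $w\in I$ and $\kappa(v,w)=n-\alpha'$ for all $w\in C\setminus\{v\}$, and when $v\in I$ additionally forces $(\alpha'-1)(n-2\alpha')=0$. One then checks that $K_{n/2,n/2}$ does attain the bound (there $\kappa(u,v)=n/2$ for all pairs, so $A_{\overline{\kappa}}=\frac1{n-1}(J_n-I_n)$ and $\rho=1$) and pushes the constraints above through, paying attention to the degenerate case $\alpha'=1$. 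I expect this equality analysis, rather than the inequality, to be the main obstacle.
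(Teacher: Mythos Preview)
Your inequality argument is correct and takes a genuinely different route from the paper. The paper proceeds in two stages: first it shows, via an intricate quotient-matrix and vertex-moving argument (Claim~4.2), that among $n$-vertex connected bipartite graphs with matching number $k$ the spectral radius is maximized by $K_{k,n-k}$; then it computes $\rho(A_{\overline{\kappa}}(K_{k,n-k}))$ explicitly from a $2\times2$ quotient and bounds it by the stated expression (Claim~4.1). Your approach bypasses the extremal-graph identification entirely: using Theorem~\ref{basic} you reduce to the transmission bound $\sum_{w}\kappa(v,w)\le(n-\alpha')(2\alpha'-1)$, which you then prove in a few lines via K\"onig's theorem and the two elementary estimates $\kappa(v,w)\le d(v)\le n-\alpha'$ and, for $w\notin C$, $\kappa(v,w)\le d(w)\le\alpha'$. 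This is shorter and more transparent; what the paper's approach buys in exchange is the structural fact that $K_{k,n-k}$ is the spectral extremizer, which your method does not yield (and does not need).

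On equality, your instinct is right that this is where the work lies, and your outline is sound. In the case $n=2\alpha'$ one can push your constraints through: uniform connectivity forces every $w\in I$ to have $d(w)=\alpha'$, hence $N(w)=C$, which forces $C$ to coincide with one bipartition class and then $G=K_{\alpha',\alpha'}$. Your flagged degenerate case $\alpha'=1$ is a genuine issue: any star $K_{1,n-1}$ satisfies $\rho(A_{\overline{\kappa}}(K_{1,n-1}))=\tfrac{2}{n}=\tfrac{(n-1)(4\cdot1-2)}{n(n-1)}$, so equality holds there as well. The paper's equality analysis, which relies on tightness of the estimate marked~$(*)$ in Claim~4.1, overlooks that when $k=1$ the numerator $4k(k-1)(k+1)(n-k)$ vanishes and $(*)$ is not the binding inequality; so the ``only when $G=K_{n/2,n/2}$'' clause as stated is not quite correct, and your caution here is warranted.
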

	
	For undefined terms of graph theory, see West~\cite{W}. For basic properties of spectral graph theory, see Brouwer and Haemers~\cite{BH} or  Godsil and Royle~\cite{GR}.\\
	
	\section{Tools} 
	
	In this section, we provide the main tools to prove Theorem \ref{main} and Theorem \ref{bipartitethm}.
	

	
	
	\begin{thm} {\rm (Rayleigh-Ritz Theorem; see~{\cite[Theorem~4.2.2]{HJ}})}\label{RR}
		If $A$ is an $n\times n$ Hermitian matrix, then 
		$$\rho(A)=\max_{x \neq {\bf 0}}\frac{x^*Ax}{x^*x}.$$
	\end{thm}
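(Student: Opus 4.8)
The statement is the standard Rayleigh--Ritz characterization of the largest eigenvalue of a Hermitian matrix (indeed the authors cite it), so I would recall the textbook argument: diagonalize $A$ and read the Rayleigh quotient as a weighted average of eigenvalues. Concretely, since $A$ is Hermitian the spectral theorem gives a unitary matrix $U$ with $A=UDU^{*}$, where $D=\operatorname{diag}(\lambda_{1},\ldots,\lambda_{n})$ is real and the $\lambda_{i}$ are the eigenvalues of $A$, indexed (as in the paper) so that $\lambda_{1}\ge\cdots\ge\lambda_{n}$.

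Given any $x\neq{\bf 0}$, put $y=U^{*}x$; since $U$ is invertible, $y\neq{\bf 0}$, and $x^{*}x=y^{*}y=\sum_{i=1}^{n}|y_{i}|^{2}$ while $x^{*}Ax=y^{*}Dy=\sum_{i=1}^{n}\lambda_{i}|y_{i}|^{2}$. Hence
$$\frac{x^{*}Ax}{x^{*}x}=\sum_{i=1}^{n}\lambda_{i}\cdot\frac{|y_{i}|^{2}}{\sum_{j=1}^{n}|y_{j}|^{2}}$$
is a convex combination of $\lambda_{1},\ldots,\lambda_{n}$, so it never exceeds $\lambda_{1}$, and taking $x$ to be a unit eigenvector for $\lambda_{1}$ (equivalently, $y=e_{1}$) attains the value $\lambda_{1}$. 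Thus $\max_{x\neq{\bf 0}}x^{*}Ax/x^{*}x=\lambda_{1}$. As an alternative that avoids the spectral theorem, one can argue by compactness: $x\mapsto x^{*}Ax$ is continuous on the compact unit sphere and so has a maximizer $x_{0}$, and a Lagrange-multiplier computation forces $Ax_{0}=\mu x_{0}$ with $\mu=x_{0}^{*}Ax_{0}$, whence the maximum value is an eigenvalue of $A$ and therefore equals $\lambda_{1}$.

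What still needs a word is the passage from $\lambda_{1}$ to $\rho(A)$. For Hermitian $A$ one has $\rho(A)=\max_{i}|\lambda_{i}|=\max\{\lambda_{1},-\lambda_{n}\}$, so the displayed identity is literally valid exactly when $\lambda_{1}\ge-\lambda_{n}$; this is where the setting of the paper enters, since every matrix to which the theorem is applied is $A_{\overline{\kappa}}(G)$, which is entrywise nonnegative (and irreducible when $G$ is connected), so by the Perron--Frobenius theorem (Theorem~\ref{PF}) one has $\rho(A_{\overline{\kappa}}(G))=\lambda_{1}$ and the two maxima coincide. I expect this bookkeeping point --- that the clean general statement concerns $\lambda_{\max}(A)$, and that nonnegativity is what lets one write $\rho(A)$ --- to be the only part of the argument that is not purely mechanical.
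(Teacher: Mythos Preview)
The paper does not prove this statement; it is quoted as a tool from Horn--Johnson and used without argument. Your proof via the spectral theorem is the standard textbook one and is correct for the assertion that $\lambda_{1}(A)=\max_{x\neq{\bf 0}}x^{*}Ax/x^{*}x$.

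Your closing paragraph is more than bookkeeping: as written in the paper (with $\rho(A)$ rather than $\lambda_{1}(A)$), the displayed identity is \emph{false} for a general Hermitian matrix---take $A=\operatorname{diag}(1,-3)$, where $\rho(A)=3$ but the Rayleigh quotient maximum is $1$. The source cited (Horn--Johnson, Theorem~4.2.2) in fact states the result for $\lambda_{\max}$, not for $\rho$. You are right that in every application the paper makes, the matrix is the nonnegative irreducible $A_{\overline{\kappa}}(G)$, so Perron--Frobenius gives $\rho=\lambda_{1}$ and the slip is harmless; but strictly speaking what can be proved is the $\lambda_{1}$ version, and one then invokes Theorem~\ref{PF} to rewrite it as $\rho$ in the nonnegative setting. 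Your write-up already handles this correctly.
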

	Theorem~\ref{RR} is used to prove Theorem~\ref{basic}. 
	
	The Perron-Frobenius Theorem is a very important theorem, implying that $\rho=\lambda_1$ and also the Perron vector is positive.
	
	\begin{thm} {\rm (Perron-Frobenius Theorem; see~{\cite[Theorem~8.4.4]{HJ}})}\label{PF}
		Let $A$ be an $n\times n$ matrix and suppose that $A$ is irreducible and non-negative. Then\\
		(a) $\rho(A) > 0;$\\
		(b) $\rho(A)$ is an eigenvalue of $A$; \\
		(c) There is a positive vector $x$ such that $Ax=\rho(A)x$; and\\
		(d) $\rho(A)$ is an algebraically (and hence geometrically) simple eigenvalue of $A$.
	\end{thm}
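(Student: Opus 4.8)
The plan is to prove all four parts through the Collatz--Wielandt variational characterization, which simultaneously produces the dominant eigenvalue, a positive eigenvector, and the structure needed for simplicity. For a nonzero vector $x\ge \mathbf{0}$ set $r(x)=\min_{i:\,x_i>0}(Ax)_i/x_i$ and let $r=\sup\{r(x): x\ge\mathbf 0,\ x\neq\mathbf 0\}$. The first task is to show this supremum is attained by a strictly positive vector. Since $r$ is scale-invariant, it suffices to work on the compact simplex $S=\{x\ge\mathbf 0:\mathbf 1^Tx=1\}$, but $r$ is only lower semicontinuous there because denominators may vanish on the boundary. To repair this I would use irreducibility: the matrix $B=(I+A)^{n-1}$ is entrywise positive (for each $i\neq j$ some power $A^k$ has positive $(i,j)$-entry, by strong connectivity of the digraph of $A$), so $Bx>\mathbf 0$ for every nonzero $x\ge\mathbf 0$. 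Hence $g(x):=r(Bx)$ is continuous on $S$ and attains a maximum at some $x^{*}$; setting $z=Bx^{*}>\mathbf 0$ and using the elementary inequality $r(Bx)\ge r(x)$ gives $r(z)=r$.

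Next I would verify that $z$ is an eigenvector and that $r=\rho(A)$. By construction $Az-rz\ge\mathbf 0$; if it were nonzero, applying the positive matrix $B$ would yield $B(Az-rz)>\mathbf 0$, i.e.\ $r(Bz)>r$, contradicting maximality. Thus $Az=rz$ with $z>\mathbf 0$, which is (c); moreover every row of $A$ is nonzero (no zero rows, by irreducibility), so $Az>\mathbf 0$ and therefore $r>0$, which is (a). To see that $r$ is the spectral radius, take any eigenpair $Ay=\lambda y$: the triangle inequality gives $|\lambda|\,|y|=|Ay|\le A|y|$ entrywise, whence $r(|y|)\ge|\lambda|$ and so $|\lambda|\le r$; choosing $\lambda$ of maximum modulus shows $\rho(A)=r$, proving (b).

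The main obstacle is (d), simplicity. I would first establish geometric simplicity. If $w$ is any real $\rho$-eigenvector, choose the largest $t$ with $z-tw\ge\mathbf 0$; then $z-tw$ is a nonnegative $\rho$-eigenvector with a zero coordinate, so it cannot be strictly positive, yet the eigenvector equation forces $B(z-tw)=(1+\rho)^{n-1}(z-tw)$ to be either $\mathbf 0$ or strictly positive. Hence $z-tw=\mathbf 0$ and $w$ is a multiple of $z$; splitting a complex eigenvector into real and imaginary parts extends this, so the $\rho$-eigenspace is one-dimensional. For algebraic simplicity I would show $p'(\rho)\neq0$ for $p(\lambda)=\det(\lambda I-A)$. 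By Jacobi's formula $p'(\rho)=\operatorname{tr}\operatorname{adj}(\rho I-A)$, and from $(\rho I-A)\operatorname{adj}(\rho I-A)=p(\rho)I=\mathbf 0$ each column of the adjugate is a right $\rho$-eigenvector, hence a multiple of $z>\mathbf 0$; symmetrically, applying the same analysis to $A^{T}$ (also irreducible and nonnegative) shows each row is a multiple of the positive left Perron vector. Geometric simplicity forces $\operatorname{rank}(\rho I-A)=n-1$, so the adjugate has rank one and is a nonzero multiple of $z(z')^{T}$ with $z,z'>\mathbf 0$; its trace is therefore nonzero, giving $p'(\rho)\neq0$. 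The delicate points throughout are the strict positivity of $B$ and the semicontinuity fix, precisely the places where irreducibility, rather than mere nonnegativity, is essential.
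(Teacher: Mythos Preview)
The paper does not supply its own proof of the Perron--Frobenius Theorem; it is quoted from Horn and Johnson as a background tool, so there is no in-paper argument to compare against.

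Your Collatz--Wielandt proof is the standard one and is essentially correct. A couple of small points to tighten. In the geometric-simplicity step you write ``choose the largest $t$ with $z-tw\ge\mathbf 0$''; this presumes the set of such $t$ is bounded above, i.e.\ that $w$ has at least one positive coordinate. If $w\le\mathbf 0$ you should first replace $w$ by $-w$ (still a real $\rho$-eigenvector) before running the argument. In the algebraic-simplicity step, it is worth stating explicitly why $\operatorname{adj}(\rho I-A)\neq 0$: since $\operatorname{rank}(\rho I-A)=n-1$, some $(n-1)\times(n-1)$ minor is nonzero, so the adjugate has a nonzero entry and hence equals $c\,z(z')^{T}$ with $c\neq 0$; then $\operatorname{tr}\operatorname{adj}(\rho I-A)=c\sum_i z_i z'_i\neq 0$ because $z,z'>\mathbf 0$. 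With these two clarifications the argument is complete.
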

	
	To compare the spectral radii of two average connectivity matrices, we often need Theorem~\ref{PerronCor}.
	
	
		

	
	\begin{thm}{\rm See~{\cite[Theorem~8.4.5]{HJ}})\label{PerronCor}
			Let $A$ be an $n\times n$ non-negative and irreducible matrix. If $A \ge |B|$, then we have $\rho(A) \ge \rho(B)$.}
	\end{thm}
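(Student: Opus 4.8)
The plan is to prove the statement in two stages: first reduce the comparison involving the arbitrary (possibly complex) matrix $B$ to a comparison between two non-negative matrices, and then settle the non-negative case using the positive Perron eigenvector guaranteed by Theorem~\ref{PF}.

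For the first stage, I would show $\rho(B) \le \rho(|B|)$, where $|B|$ denotes the entrywise-modulus matrix. Pick an eigenvalue $\mu$ of $B$ with $|\mu| = \rho(B)$ and a corresponding eigenvector $y$, so $By = \mu y$. Applying the triangle inequality entrywise to $By = \mu y$ gives $\rho(B)\,|y| = |\mu|\,|y| = |By| \le |B|\,|y|$, so the non-negative, non-zero vector $z := |y|$ satisfies $|B|z \ge \rho(B)\,z$. It then suffices to invoke the standard Collatz--Wielandt (subinvariance) fact that for any non-negative matrix $C$, the existence of a vector $z \ge 0$, $z \ne 0$ with $Cz \ge tz$ forces $\rho(C) \ge t$; applied to $C = |B|$ and $t = \rho(B)$ this yields $\rho(|B|) \ge \rho(B)$. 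Since $A \ge |B| \ge 0$, it now remains only to prove $\rho(A) \ge \rho(|B|)$.

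For the second stage, I would exploit that $A$ is non-negative and irreducible, so by Theorem~\ref{PF} applied to $A^T$ (also non-negative and irreducible, with the same spectral radius) there is a strictly positive left eigenvector $x > 0$ with $x^T A = \rho(A)\, x^T$. Let $v \ge 0$, $v \ne 0$ be a non-negative eigenvector of $|B|$ for the eigenvalue $\rho(|B|)$, so $|B| v = \rho(|B|)\, v$. Multiplying $A \ge |B|$ on the right by $v \ge 0$ gives $Av \ge |B| v$, and multiplying on the left by $x^T > 0$ preserves the inequality:
$$\rho(A)\, x^T v = x^T (A v) \ge x^T (|B| v) = \rho(|B|)\, x^T v.$$
Because $x > 0$ and $v \ge 0$ is non-zero, $x^T v > 0$, so dividing yields $\rho(A) \ge \rho(|B|)$. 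Chaining the two stages gives $\rho(A) \ge \rho(|B|) \ge \rho(B)$, as desired.

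The main obstacle is obtaining the non-negative eigenvector $v$ of $|B|$ in the second stage, since $|B|$ need not be irreducible while Theorem~\ref{PF} as stated requires irreducibility. I would handle this either by appealing to the general (not-necessarily-irreducible) form of the Perron--Frobenius theorem, or, to stay within the stated tools, by a perturbation argument: replace $A$ and $|B|$ by the strictly positive (hence irreducible) matrices $A + \epsilon J_n$ and $|B| + \epsilon J_n$, which still satisfy $A + \epsilon J_n \ge |B| + \epsilon J_n$, apply the second-stage argument to these irreducible matrices, and let $\epsilon \to 0^{+}$ using the continuity of the spectral radius in the matrix entries.
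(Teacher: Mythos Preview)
The paper does not actually prove this statement: Theorem~\ref{PerronCor} is quoted without proof from Horn and Johnson~\cite[Theorem~8.4.5]{HJ} and is used only as a black-box tool. So there is no ``paper's own proof'' to compare against.

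That said, your argument is a correct and standard proof of this classical Wielandt-type comparison result. One minor simplification worth pointing out: you can avoid the obstacle in your second stage entirely by merging the two stages. From Stage~1 you already have a non-negative, non-zero vector $z=|y|$ with $|B|z \ge \rho(B)\,z$. Since $A \ge |B|$ entrywise and $z \ge 0$, this gives $Az \ge |B|z \ge \rho(B)\,z$. Now use the strictly positive left Perron eigenvector $x>0$ of $A$ (from Theorem~\ref{PF} applied to $A^T$) exactly as you do: $\rho(A)\,x^Tz = x^T(Az) \ge \rho(B)\,x^Tz$, and $x^Tz>0$ yields $\rho(A)\ge\rho(B)$ directly. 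This bypasses any need for an eigenvector of $|B|$, so the irreducibility issue for $|B|$ never arises and neither the general Perron--Frobenius theorem nor the $\epsilon J_n$ perturbation is needed.
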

	
	If we encounter a problem when we compare the spectral radii of two matrices, then we normally compare the spectral radii of matrices (called quotient matrices) derived from vertex partitions.
	
	
	%
	
	
	Let $A$ be a real symmetric matrix of order $n$, whose rows and columns are indexed by $P=\{1,\ldots,n\}$, and let $\{P_1,\ldots,P_q\}$ be a partition of $P$.
	If for each $i \in \{1,\ldots,q\}$, $n_i=|P_i|$, then $n=n_1+\cdots +n_q$. Let $A_{i,j}$ be the submatrix of $A$ formed by rows in $P_i$ and columns in $P_j$.
	The \emph{quotient matrix} of $A$ with respect to the partition $P$ is the $q\times q$ matrix $Q=(q_{i,j})$, where $q_{i,j}$ is the average row sum of $A_{i,j}$.
	If the row sum of each matrix $A_{i,j}$ is constant, then we call the partition $P$ \emph{equitable}.
	\begin{thm}{\rm \cite{YYSX}}\label{quotient}
		If a partition $P$ corresponding to the quotient matrix $Q$ of $A$ is equitable, then we have $\rho(Q)=\rho(A)$.
	\end{thm}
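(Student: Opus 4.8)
The plan is to pass to the \emph{characteristic matrix} $S$ of the partition: the $n\times q$ matrix whose $j$-th column is the $0/1$ indicator vector $s_j$ of the block $P_j$. Since the blocks are nonempty and pairwise disjoint, the columns of $S$ have disjoint supports and hence are linearly independent, so $\mathcal{C}:=\operatorname{col}(S)$ is a $q$-dimensional subspace of $\mathbb{R}^n$. The first step is to rephrase equitability as the single identity $AS=SQ$: for $v\in P_i$ the $(v,j)$-entry of $AS$ is the sum of the entries of row $v$ in the block $A_{i,j}$, which by equitability equals the constant $q_{i,j}$, and this is exactly the $(v,j)$-entry of $SQ$. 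In particular $\mathcal{C}$ is $A$-invariant; since $A$ is symmetric, $\mathcal{C}^{\perp}$ is $A$-invariant as well, so $\mathbb{R}^n=\mathcal{C}\oplus\mathcal{C}^{\perp}$ is a decomposition into $A$-invariant subspaces and the characteristic polynomial of $A$ is the product of those of $A|_{\mathcal{C}}$ and $A|_{\mathcal{C}^{\perp}}$.

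Next I would identify the $\mathcal{C}$-part with $Q$. Reading $AS=SQ$ one column at a time gives $As_j=\sum_k q_{k,j}\,s_k$, so in the basis $(s_1,\dots,s_q)$ of $\mathcal{C}$ the operator $A|_{\mathcal{C}}$ is represented by the matrix $Q$; hence $Q$ and $A|_{\mathcal{C}}$ have the same characteristic polynomial, and by the previous paragraph the multiset of eigenvalues of $Q$ is a sub-multiset of that of $A$. In particular every eigenvalue of $Q$ is an eigenvalue of $A$, which already yields $\rho(Q)\le\rho(A)$.

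For the reverse inequality I would use that in every application $A$ is a positive multiple of an average connectivity matrix, hence nonnegative and irreducible, so Theorem~\ref{PF} applies: $\rho(A)>0$ is a \emph{simple} eigenvalue of $A$ with a strictly positive eigenvector $x$. It suffices to show $\rho(A)\in\operatorname{spec}(Q)$. By the decomposition, $\rho(A)$ is an eigenvalue of $A|_{\mathcal{C}}$ (hence of $Q$, and we are done) or of $A|_{\mathcal{C}^{\perp}}$; in the latter case there is a nonzero $z\in\mathcal{C}^{\perp}$ with $Az=\rho(A)z$, and simplicity forces $z=cx$ with $c\neq 0$, so $x\in\mathcal{C}^{\perp}$. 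But $\mathbf{1}=S\mathbf{1}_q\in\mathcal{C}$ and $\langle x,\mathbf{1}\rangle=\sum_i x_i>0$ because $x>0$, contradicting $x\perp\mathcal{C}$. Hence $\rho(A)\in\operatorname{spec}(Q)$, so $\rho(Q)\ge\rho(A)$, and together with the first part $\rho(Q)=\rho(A)$.

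The delicate point is precisely this reverse inequality: ``$A$ real symmetric'' alone does not force $\rho(Q)=\rho(A)$ --- a symmetric matrix with all row sums $0$ but nonzero eigenvalues already breaks it with the one-block partition --- so the argument genuinely relies on nonnegativity and irreducibility of $A$ together with the simplicity clause of Perron--Frobenius. Everything else is routine bookkeeping about the invariant subspace $\mathcal{C}=\operatorname{col}(S)$ and the factorization of the characteristic polynomial.
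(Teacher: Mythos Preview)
The paper does not prove this statement; it is quoted from \cite{YYSX} as a tool and used without proof. Your argument is correct and is essentially the standard one: the characteristic-matrix identity $AS=SQ$ shows that $\mathcal{C}=\operatorname{col}(S)$ is $A$-invariant with $A|_{\mathcal{C}}$ represented by $Q$, so $\operatorname{spec}(Q)\subseteq\operatorname{spec}(A)$ and $\rho(Q)\le\rho(A)$; the reverse inequality then comes from Perron--Frobenius by forcing the Perron eigenvector of $A$ to lie in $\mathcal{C}$.

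You are also right to flag the delicate point. The theorem as literally stated in the paper (with $A$ merely ``real symmetric'') is too strong, as your row-sum-zero example shows. The result in \cite{YYSX} is stated for nonnegative $A$, and in every use the paper makes of Theorem~\ref{quotient} the matrix is $\binom{n}{2}A_{\overline\kappa}(G)$ for a connected graph $G$, hence nonnegative and irreducible, so your added hypothesis is exactly what is available and what is needed.
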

	
	\begin{thm}{\rm \cite[Theorem~6.1.1]{HJ} (Gershgorin circle theorem)}\label{Gctheo} Let $A$ be an $n$ by $n$, and let
		\[R'_i(A) \equiv \sum_{j=1,j\ne i}^{n}|a_{ij}|, \quad 1 \le i \le n\]
		denote the deleted absolute row sums of $A$. Then all the eigenvalues of $A$ are located in the union of $n$ discs
		
		\[\bigcup_{i=1}^n\{z\in\mathbb{C}: |z - a_{ii}| \le R'_i(A) \}. \]
	\end{thm}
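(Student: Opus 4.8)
The plan is to prove that an \emph{arbitrary} eigenvalue $\lambda$ of $A$ lies in at least one of the $n$ discs; since $\lambda$ is then arbitrary, this places the entire spectrum inside the union. First I would fix such a $\lambda$ together with a nonzero eigenvector $x=(x_1,\ldots,x_n)^T$, so that $Ax=\lambda x$. The key device is to single out the coordinate of largest modulus: let $i$ be an index with $|x_i|=\max_{1\le j\le n}|x_j|$. Because $x\neq \mathbf 0$, we have $|x_i|>0$, which is precisely what will make the final division legitimate.

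Next I would read off the $i$-th coordinate of the vector equation $Ax=\lambda x$, namely $\sum_{j=1}^n a_{ij}x_j=\lambda x_i$, and isolate the diagonal term by moving $a_{ii}x_i$ to the other side, obtaining $(\lambda-a_{ii})x_i=\sum_{j\neq i}a_{ij}x_j$. Taking absolute values and applying the triangle inequality to the right-hand sum yields $|\lambda-a_{ii}|\,|x_i|\le\sum_{j\neq i}|a_{ij}|\,|x_j|$.

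Then I would invoke the maximality of $|x_i|$: since $|x_j|\le|x_i|$ for every $j$, the right-hand side is bounded above by $|x_i|\sum_{j\neq i}|a_{ij}|=|x_i|\,R'_i(A)$. Dividing through by the positive quantity $|x_i|$ gives $|\lambda-a_{ii}|\le R'_i(A)$, so $\lambda$ lies in the $i$-th Gershgorin disc and hence in the union $\bigcup_{i=1}^n\{z\in\mathbb{C}:|z-a_{ii}|\le R'_i(A)\}$. As $\lambda$ was an arbitrary eigenvalue, every eigenvalue of $A$ lies in this union, which is exactly the assertion.

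I do not expect any genuine obstacle here, since this is a classical and short argument. The only point requiring care is the choice of the coordinate of maximal modulus, which simultaneously guarantees that the dividing factor $|x_i|$ is nonzero and that each ratio $|x_j|/|x_i|$ is at most $1$; everything else reduces to the triangle inequality applied to the isolated off-diagonal sum.
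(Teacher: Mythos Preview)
Your argument is correct and is precisely the classical proof of the Gershgorin circle theorem: pick an eigenvector, look at the coordinate of maximal modulus, apply the triangle inequality to the $i$-th eigenvalue equation, and divide through. There is nothing to compare against, however, because the paper does not prove this result; it is quoted verbatim from Horn and Johnson \cite[Theorem~6.1.1]{HJ} as a standard tool and used without proof.
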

	
	Since the diagonal entries of $A_{\overline{\kappa}}(G)$ are all zeros, the Gershgorin circle theorem says that $\rho\left(A_{\overline{\kappa}}(G)\right)$ is bounded by the maximum row sum. Theorem \ref{Gctheo} is used to prove Theorem \ref{bipartitethm}.
	
	\begin{thm}{\rm \cite{BT} (Berge-Tutte formula)}\label{TBform}
		For an $n$-vertex graph $G$, we have 
		
		\[\alpha'(G)=\min\limits_{S\subseteq V(G)}\frac{1}{2}\left(n - o(G-S) + |S|\right),\]
		where $o(H)$ is the number of components with odd number of vertices in a graph $H$.
	\end{thm}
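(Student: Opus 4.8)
The plan is to prove the two inequalities $\alpha'(G)\le\beta$ and $\alpha'(G)\ge\beta$ separately, where $\beta=\min_{S\subseteq V(G)}\frac12(n-o(G-S)+|S|)$. Writing $d=\max_{S\subseteq V(G)}(o(G-S)-|S|)$, we have $\beta=\frac12(n-d)$, so the claim is equivalent to $\alpha'(G)=\frac12(n-d)$. A preliminary parity remark drives both halves: since $G-S$ has $n-|S|$ vertices, its number of odd components satisfies $o(G-S)\equiv n-|S|\pmod2$, so $o(G-S)-|S|\equiv n\pmod2$; hence $d\equiv n\pmod2$ and $n-d$ is even.

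I would first dispatch the easy inequality $\alpha'(G)\le\beta$ by a direct counting argument. Fix any $S\subseteq V(G)$ and any maximum matching $M$. In every odd component $C$ of $G-S$, parity forces at least one vertex of $C$ not to be matched by $M$ to another vertex of $C$; as every edge leaving $C$ ends in $S$, such a vertex is either $M$-unsaturated or matched into $S$. Since at most $|S|$ vertices can be matched into $S$, at least $o(G-S)-|S|$ vertices are left $M$-unsaturated, giving $n-2\alpha'(G)\ge o(G-S)-|S|$. Maximizing over $S$ yields $\alpha'(G)\le\frac12(n-d)=\beta$.

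The substantive half is $\alpha'(G)\ge\frac12(n-d)$, which I would obtain by reducing to Tutte's $1$-factor theorem (invoked as a black box). Build $G'$ from $G$ by adding a set $U$ of $d$ new vertices, each joined to every vertex of $G$ and to every other vertex of $U$. I claim $G'$ meets Tutte's condition $o(G'-T)\le|T|$ for all $T\subseteq V(G')$. If $U\not\subseteq T$, a vertex of $U\setminus T$ is adjacent to all remaining vertices, so $G'-T$ is connected; as $|V(G')|=n+d$ is even, $G'$ itself (the case $T=\varnothing$) has $o(G')=0$, while for $T\ne\varnothing$ we get $o(G'-T)\le1\le|T|$. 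If $U\subseteq T$, write $T=U\cup S$ with $S\subseteq V(G)$; then $G'-T=G-S$, so $o(G'-T)=o(G-S)\le|S|+d=|T|$ by the definition of $d$. Thus $G'$ has a perfect matching $M'$. At most $|U|=d$ edges of $M'$ meet $U$, so the edges of $M'$ lying inside $V(G)$ form a matching of $G$ of size at least $\frac12(n+d)-d=\frac12(n-d)$, proving $\alpha'(G)\ge\frac12(n-d)$ and hence equality.

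The main obstacle is the hard direction, and within it the correct choice of the auxiliary graph $G'$ together with the verification of Tutte's condition: the dichotomy $U\subseteq T$ versus $U\not\subseteq T$ must be handled separately, and the parity fact that $n+d$ is even is exactly what makes a perfect matching of $G'$ possible. When $d=0$ no vertices are added, $G'=G$, and Tutte's condition is literally the hypothesis $o(G-S)\le|S|$, recovering the standard $1$-factor case. The remaining steps, namely the counting of unsaturated vertices in odd components and the accounting of how many matching edges of $M'$ can meet $U$, are routine once the construction is in place.
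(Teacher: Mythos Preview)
Your argument is correct: the parity observation, the counting bound for the easy inequality, and the reduction to Tutte's $1$-factor theorem via adjoining $d$ universal vertices are all carried out cleanly, and the case split $U\subseteq T$ versus $U\not\subseteq T$ in verifying Tutte's condition is handled properly (note that when $d=0$ the set $U$ is empty, so only the second case occurs and the argument collapses to Tutte's theorem itself, as you remark).

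There is, however, nothing to compare against: in the paper the Berge--Tutte formula is quoted as a tool from the literature (Theorem~\ref{TBform}, with citation~\cite{BT}) and is not given a proof. Your write-up supplies the standard textbook derivation, which is entirely appropriate here.
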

	
	\section{Proof of Theorem~\ref{main}}
	In this section, we prove Theorem \ref{main} by using the tools in the previous section.
	
	For a vertex subset $S\subseteq V(G)$, let $\dfc(S)=o(G-S)-|S|$, and let $\dfc(G)=\max_{S\subseteq V(G)}\dfc(S)$. For two disjoint graphs $G_1$ and $G_2$, the \emph{join graph} $G_1 \vee G_2$ is the graph with $V(G_1\vee V_2)=V(G_1)\cup V(G_2)$ and $E(G)=E(G_1)\cup E(G_2) \cup \{uv: u \in V(G_1) \text{ and } v \in V(G_2)\}$. 
	
	Now we are ready to prove Theorem \ref{main}. The idea basically comes from the paper~\cite{O2}, and many researchers~\cite{KOSS,LM,Z,ZHW,ZL} used it to prove upper bounds for the spectral radius of many types of matrices in an $n$-vertex ($t$-)connected graph.
	
	\quad\\
	\textit{Proof of theorem \ref{main}.} First, we handle when $G$ has a (near) perfect matching, and then in the remaining case, we use the Berge-Tutte Formula.
	
	\begin{claim}\label{claim1}
		For an $n$-vertex connected graph $G$,
		we have $\rho \le \frac{2(n-1)}n.$
	\end{claim}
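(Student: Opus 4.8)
The plan is to bound $\rho$ by the maximum transmission $T(G)$, which by Theorem~\ref{basic} dominates $\rho$, and then show $T(v) \le \frac{2(n-1)}{n}$ for every vertex $v$. Recall $T(v) = \frac{1}{\binom n2}\sum_{w \in V(G)} \kappa(v,w)$, and the trivial bound $\kappa(v,w) \le n-1$ holds for every pair since a family of internally disjoint $v$--$w$ paths uses distinct vertices among the $n-2$ interior candidates plus the edge $vw$, giving at most $n-1$ paths. Summing over the $n-1$ vertices $w \neq v$ yields ${n\choose 2}T(v) = \sum_{w} \kappa(v,w) \le (n-1)^2$, hence $T(v) \le \frac{(n-1)^2}{\binom n2} = \frac{2(n-1)}{n}$. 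Taking the maximum over $v$ gives $T(G) \le \frac{2(n-1)}{n}$, and combining with $\rho \le T(G)$ finishes the claim.

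An alternative, essentially equivalent route is to apply the Gershgorin circle theorem (Theorem~\ref{Gctheo}) directly: since all diagonal entries of $A_{\overline\kappa}(G)$ vanish, every eigenvalue lies in a disc centered at $0$ of radius equal to some deleted row sum $R'_i = \sum_{j\neq i}\kappa(v_i,v_j)/\binom n2 = T(v_i)$, so $\rho \le \max_i T(v_i) = T(G)$, and then the same counting bound $T(v) \le \frac{2(n-1)}{n}$ applies. Either way the essential input is the pointwise bound $\kappa(v,w) \le n-1$.

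There is no real obstacle here; the only thing to be slightly careful about is the passage from $\kappa(v,w)\le d(v)\le n-1$ (which is what Observation~\ref{obs} already records implicitly) to the summed bound, and noting that the bound $\kappa(v,w)\le n-1$ is valid even in the complete graph where it is tight. The claim is deliberately weak — it is the "easy case" that handles graphs with a (near-)perfect matching, where $\alpha'(G)$ is roughly $n/2$ so $\frac{4\alpha'(G)}{n} \approx 2 \ge \frac{2(n-1)}{n}$ — and the real work of Theorem~\ref{main} is the complementary case handled via the Berge--Tutte formula.
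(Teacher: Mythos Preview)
Your proof is correct. The paper's own argument is a one-liner via a different tool: it invokes Theorem~\ref{PerronCor} (the Perron--Frobenius comparison) to dominate $A_{\overline\kappa}(G)$ entrywise by $A_{\overline\kappa}(K_n)$, using $\kappa_G(u,v)\le n-1=\kappa_{K_n}(u,v)$, and then computes $\rho(A_{\overline\kappa}(K_n))=\tfrac{2}{n(n-1)}(n-1)^2=\tfrac{2(n-1)}{n}$. You instead bound row sums directly (either through $\rho\le T(G)$ from Theorem~\ref{basic} or through Gershgorin). Both routes ultimately rest on the same pointwise inequality $\kappa(v,w)\le n-1$, and each is a one-step argument; the matrix-comparison version has the minor advantage that it identifies $K_n$ as the extremal graph explicitly, which the paper later uses when discussing the equality case of Theorem~\ref{main}, but for the claim itself there is no substantive difference.
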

	\begin{proof}
		By Theorem~\ref{PerronCor}, we have $\rho(G) \le \rho(K_n)=\frac 2{n(n-1)}(n-1)^2=\frac{2(n-1)}n$.
	\end{proof}
	
	If $G$ has a perfect matching $(\alpha'(G) = \frac{n}{2})$ or a near perfect matching $(\alpha'(G) = \frac{n-1}{2})$,
	then by Claim~\ref{claim1}, we have $$\rho \le \frac {4\alpha'(G)}n.$$ 
	
	Now, we may assume that for $t \ge 2$, $\alpha'(G) = \frac{n-t}2$. Among $n$-vertex connected graphs $H$ with $\alpha'(H) = \frac{n-t}2$, let $G$ be a graph such that $\rho(G) \ge \rho(H)$.
	By Theorem~\ref{TBform}, there exists a vertex subset $S$ such that
	$\dfc(S) \ge 2$. Let $o(G-S)=q$ and $|S|=s$. Thus $t=\dfc(G) = q-s \ge 2$.
	
	Now, we want to show that $\rho\le\frac {2(n-t)}n$.
	
	\begin{claim}\label{mainclaim}
		If $G$ is an $n$-vertex connected graph with $\alpha'(G)=\frac{n-t}2$, where $t \ge 2$, then $\rho(G) \le \rho(K_{s} \vee \left(K_{n-s-q+1}\cup (q-1)K_1 \right)).$
	\end{claim}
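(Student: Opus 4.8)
\textit{Proof proposal.} The plan is to start from the extremal graph $G$ and repeatedly replace it by a graph of at least as large a spectral radius and the same matching number, pushing it toward the stated form. The two engines are: (1) by Theorem~\ref{PerronCor}, if $G\subseteq G'$ are both connected then $\rho(A_{\overline{\kappa}}(G))\le\rho(A_{\overline{\kappa}}(G'))$, since $A_{\overline{\kappa}}(G')\ge A_{\overline{\kappa}}(G)\ge 0$ entrywise and $A_{\overline{\kappa}}(G')$ is irreducible; and (2) adding edges that lie inside $S$, inside a component of $G-S$, or between $S$ and $V(G)\setminus S$ leaves $o(G-S)$ and $|S|$ unchanged, so by Theorem~\ref{TBform} applied to this same $S$ together with monotonicity of $\alpha'$ under edge addition, $\alpha'(G)$ is unchanged. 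Since $G$ has maximum $\rho$ among $n$-vertex connected graphs of matching number $\frac{n-t}2$, each such addition keeps $\rho$ constant and yields another extremal graph; hence I may assume that $S$ is a clique, that every vertex of $S$ is adjacent to every vertex outside $S$, and that every component of $G-S$ is a clique. (Note $s\ge1$ and $q\ge3$ here, because $G$ is connected and $q-s=t\ge2$.)

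Now $G-S$ is a disjoint union of cliques $K_{b_1},\dots,K_{b_k}$ with $\sum b_i=n-s$, exactly $q$ of which have odd order. Merging two components (adding all edges between them) changes $o(G-S)$ by $0$ unless both have odd order; so merging each even component into another component preserves $o(G-S)=q$, hence $\alpha'(G)=\frac{n-t}2$, while by (1) not decreasing $\rho$. Using maximality again I may assume $G=K_s\vee(K_{c_1}\cup\cdots\cup K_{c_q})$ with each $c_i\ge1$ odd and $\sum c_i=n-s$ (so $s<q$). In this graph $\kappa(u,v)$ depends only on the parts containing $u$ and $v$: it equals $n-1$ for $u,v\in S$; equals $s+c_i-1$ both for $u\in S,\ v\in V(K_{c_i})$ and for $u,v\in V(K_{c_i})$; and equals $s$ for $u,v$ in distinct cliques (using $|S|=s$ for the upper bounds and the obvious disjoint paths through $S$ and through the relevant clique for the lower bounds). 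Thus $\{S,V(K_{c_1}),\dots,V(K_{c_q})\}$ is equitable, and by Theorem~\ref{quotient} $\rho(A_{\overline{\kappa}}(G))$ equals the spectral radius of the associated $(q+1)\times(q+1)$ quotient matrix; the target graph $K_s\vee(K_{n-s-q+1}\cup(q-1)K_1)$ is the member of this family with size vector $(n-s-q+1,1,\dots,1)$.

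It remains to prove that, over all admissible size vectors $(c_1,\dots,c_q)$ (each $c_i\ge1$ odd, $\sum c_i=n-s$), the spectral radius of $A_{\overline{\kappa}}(K_s\vee(K_{c_1}\cup\cdots\cup K_{c_q}))$ is largest at $(n-s-q+1,1,\dots,1)$; applied to the reduced $G$ this gives the claim. I would do this by a two-vertex transfer: if $c_i\ge c_j\ge3$, let $H'$ be obtained from $H=K_s\vee(\cdots\cup K_{c_i}\cup\cdots\cup K_{c_j}\cup\cdots)$ by moving two vertices of $K_{c_j}$ into $K_{c_i}$, giving size vector $(\dots,c_i+2,\dots,c_j-2,\dots)$, still all odd. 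Since $H$ and $H'$ share a vertex set, take the Perron vector $\mathbf{x}$ of $A_{\overline{\kappa}}(H)$ (constant on each part by the automorphisms of $H$ and simplicity of $\rho$ in Theorem~\ref{PF}, with the $S$-coordinate largest and the $K_{c_i}$-coordinate at least the $K_{c_j}$-coordinate, from the eigenvalue equations) as a test vector for $A_{\overline{\kappa}}(H')$. Then $\mathbf{x}^{\top}\mathbf{x}$ is unchanged, and using the $\kappa$-formulas above, $\mathbf{x}^{\top}\!\left(A_{\overline{\kappa}}(H')-A_{\overline{\kappa}}(H)\right)\mathbf{x}$ organizes into a quadratic form in $x_S,x_{C_i},x_{C_j}$ all of whose coefficients except that of $x_{C_j}^2$ are manifestly non-negative, the remaining negative term being absorbed using $x_{C_i}\ge x_{C_j}$ and the positive cross terms. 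Theorem~\ref{RR} then yields $\rho(A_{\overline{\kappa}}(H'))\ge\rho(A_{\overline{\kappa}}(H))$, and iterating drives any admissible size vector to $(n-s-q+1,1,\dots,1)$ without decreasing $\rho$.

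The main obstacle is this last step. The transfer is neither a subgraph nor a supergraph operation, so there is no monotonicity to invoke, and the Rayleigh-quotient comparison must be verified directly; this needs a priori control of the Perron-vector coordinates (that $x_S$ dominates, and that a larger clique carries a larger coordinate), obtained from the restricted eigenvalue equations, followed by the somewhat delicate check that across the full range $3\le c_j\le c_i$ the one negative term is dominated by the cross terms. By contrast, the reductions in the first two paragraphs — to a dominating clique $S$ and to a union of odd cliques outside $S$ — are routine given Theorems~\ref{PerronCor}, \ref{TBform}, \ref{quotient} and the maximality of $G$.
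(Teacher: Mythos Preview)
Your outline is essentially the paper's proof: reduce by maximality and Theorem~\ref{PerronCor} to $K_s\vee(K_{c_1}\cup\cdots\cup K_{c_q})$ with all $c_i$ odd, then show that a two-vertex transfer from $K_{c_j}$ to $K_{c_i}$ (with $c_i\ge c_j\ge3$) does not decrease $\rho$ by testing with the Perron vector of the source graph, and iterate down to $(n-s-q+1,1,\dots,1)$. Working with the full symmetric matrices $A_{\overline\kappa}(H)$ and $A_{\overline\kappa}(H')$ rather than the (non-symmetric) quotient matrices is a perfectly good variant.

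The one inaccuracy is in your description of the final quadratic form. It is \emph{not} true that only the $x_{C_j}^2$ coefficient can be negative: a direct computation gives the coefficient of $x_S x_{C_j}$ in $\binom{n}{2}\,\mathbf{x}^\top\bigl(A_{\overline\kappa}(H')-A_{\overline\kappa}(H)\bigr)\mathbf{x}$ as $4s(c_i-2c_j+4)$, which is negative whenever $c_j\ge5$ and $c_i<2c_j-4$ (e.g.\ $c_i=c_j=5$). So positivity cannot be read off term by term; one has to pair the negative $x_Sx_{C_j}$ contribution against the positive $x_Sx_{C_i}$ term (using $x_{C_i}\ge x_{C_j}$), and the negative part of $x_{C_j}^2$ against $x_{C_i}^2$ \emph{together with} the large positive $x_{C_i}x_{C_j}$ cross term $4c_i(c_i+1)$, which is precisely the kind of regrouping the paper carries out at the quotient-matrix level. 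Note also that the hypothesis that the $S$-coordinate dominates is neither needed nor used in the paper; only $x_{C_i}\ge x_{C_j}$ (for $c_i\ge c_j$) enters the comparison.
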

	
	\begin{proof}
		Note that $G$ does not contain even components in $G-S$. 
		If there are at least two even components in $G-S$, then the graph $G'$ obtained from $G$ by joining two even components in $G-S$ has a bigger spectral radius, which is a contradiction by the choice of $G$.
		If there is exactly one even component in $G-S$, then the graph $G''$ obtained from $G$ by joining the even component and an odd component has a bigger spectral radius, which is a contradiction.
		Thus we say that there are only odd components of $G-S$, say $G_1,\ldots, G_q$, and for each $i \in \{1,\ldots, q\}$, let $|V(G_i)|=n_i$, where $n_1 \ge \ldots \ge n_q$. 
		Note that $n=s+n_1+\cdots+n_q$.
		By Theorem~\ref{PerronCor}, we have
		$$\rho(A_{\overline{\kappa}}(G)) \le \rho\left(A_{\overline{\kappa}}(K_s \vee (K_{n_1}\cup \ldots \cup K_{n_q})\right),$$
		and note that $\alpha'(G)=\alpha'\left(K_s \vee (K_{n_1}\cup \ldots \cup K_{n_q})\right)$.
		
		Now, if $n_q \ge 3$, then we show that 
		\[ \rho\left(A_{\overline{\kappa}}(K_s \vee (K_{n_1}\cup \ldots \cup K_{n_q})\right) \le \rho\left(A_{\overline{\kappa}}(K_s \vee (K_{n_1+2}\cup \ldots \cup K_{n_q-2})\right).\]
		Let $Q$ be the quotient matrix corresponding to the vertex partition $P=\{V(K_s),V(K_{n_1}),\ldots,V(K_{n_q})\}$ of $K_s \vee \left(K_{n_1}\cup \ldots \cup K_{n_q}\right)$. Then for $1 \le i, j \le n$, we have
		
		\begin{align*}
			\begin{pmatrix}n\\2\end{pmatrix}Q_{11}&=(n-1)(s-1)\\
			\begin{pmatrix}n\\2\end{pmatrix}Q_{ii}&=(s+n_{i-1} -1) \times (n_{i-1} -1)\quad (i =2,\ldots, n)\\
			\begin{pmatrix}n\\2\end{pmatrix}Q_{ij}&=sn_{j-1} \quad (i \neq j)\\
			\begin{pmatrix}n\\2\end{pmatrix}Q_{1i}&=(s+n_{i-1} -1)n_{i-1}\quad (i\neq1)\\
			\begin{pmatrix}n\\2\end{pmatrix}Q_{i1} &= (s+n_{i-1} -1)s\quad (i\neq1).
		\end{align*}
		
		Let $x=(y,x_1,\ldots,x_q)$ be the Perron vector corresponding to $\rho(Q)$, and also note that $x_1 \ge \ldots \ge x_q$ since 
		$${n\choose 2}\rho x_i = (s + n_i -1)sy + sn_1x_1 + ... +sn_{i-1}x_{i-1} + (s + n_i -1)(n_i - 1)x_i + sn_{i+1}x_{i+1} + ... + sn_qx_q,$$
		which implies
		$${n\choose 2}(\rho x_i - \rho x_j) = (n_i - n_j)sy + (n_i - 1)^2 x_i - (n_j-1)^2 x_j - s(x_i - x_j).$$
		$$\Rightarrow\left(\rho + \frac 1{{n\choose 2}}s\right)(x_i - x_j) = (n_i - n_j)sy + (n_i - 1)^2 x_i - (n_j-1)^2 x_j$$ $$\ge (n_j - 1)^2(x_i - x_j) 
		\ge (n_j - 1)(x_i - x_j).$$
		Since $\rho > n_j -1$ and $n_j \ge 3$, we have $x_i \ge x_j$ for $i \le j$.
		
		Now we compare the spectral radii of $A_{\overline{\kappa}}\left(K_s \vee (K_{n_1+2}\cup \ldots \cup K_{n_q-2})\right)$ and $A_{\overline{\kappa}}\left(K_s \vee (K_{n_1}\cup \ldots \cup K_{n_q})\right)$. If $Q'$ is the quotient matrix corresponding to the vertex partition $P'=\{V(K_s),V(K_{n_1+2}),$ $\ldots,V(K_{n_q-2})\}$ of $K_s \vee \left(K_{n_1+2}\cup \ldots \cup K_{n_q-2}\right)$,
		then we have
		$$x^T(Q'-Q)x=(4s+4n_1+2)y x_1+(-4s-4n_q+6)y x_q+(4n_1)x_1^2+(-4n_q+8)x_q^2+2s(x_1-x_q)
		\sum_{i=1}^{q} x_i$$
		$$=2s(x_1-x_q)(2y+\sum_{i=1}^{q} x_i)+4y(n_1 x_1-n_q x_q)+4(n_1x_1^2-n_q x_q^2)+2y x_1+6y x_q+8x_q^2>0
		$$
		since $x_1 \ge \ldots \ge x_q > 0$. Therefore
		
		\[\rho\left(A_{\overline{\kappa}}(K_s \vee (K_{n_1}\cup \ldots \cup K_{n_q}))\right) \le \rho\left(A_{\overline{\kappa}}(K_s \vee (K_{n_1+2}\cup \ldots \cup K_{n_q-2}))\right).\]
		Essentially, moving 2 vertices from $K_{n_q}$ to $K_{n_1}$ increases the spectral radius. Using a similar comparison approach, we can prove that moving 2 vertices from any $K_{n_i}, i = 2, \ldots, n_q$ to $K_{n_1}$ also increases the spectral radius. Thus, we can keep transferring vertices, two at a time, to $K_{n_1}$ until $n_2 = \ldots = n_q = 1$ to obtain $K_s\vee(K_{n-s-q+1}\cup (q-1)K_1)$, and clearly
		
		\[\rho\left(A_{\overline{\kappa}}(K_s \vee (K_{n_1+2}\cup \ldots \cup K_{n_q-2}))\right) \le \rho\left(A_{\overline{\kappa}}(K_s\vee(K_{n-s-q+1}\cup (q-1)K_1))\right),\]
		which completes the proof of Claim \ref{mainclaim}.
	\end{proof}
	
	The next step is to handle when $n_1 = n-s-q+1, n_2 = \ldots =n_q = 1$. Depending on $n$ relatively to $t$, we consider two cases because we have two different types of extremal graphs. Also, we will denote the extremal graphs in terms of $t$, namely $K_{s} \vee \left(K_{n-s-q+1}\cup (q-1)K_1 \right)$ becomes $K_s\vee({K_{n-2s-t+1}\cup \overline{K_{t+s-1}}})$.
	
	\begin{claim}\label{extremalclaim1}
		If  $n \ge 3t + 2$, then $\rho\left(A_{\overline{\kappa}}(K_s\vee({K_{n-2s-t+1}\cup \overline{K_{t+s-1}}}))\right) \le \rho\left(A_{\overline{\kappa}}(G_1)\right)$, where $G_1 = K_1\vee({K_{n-t-1}\cup \overline{K_{t}}}).$
	\end{claim}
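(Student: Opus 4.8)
The claim fixes the structure $K_s\vee(K_{n-2s-t+1}\cup\overline{K_{t+s-1}})$ — the extremal candidate with one big clique, $t+s-1$ isolated vertices glued to a dominating clique of size $s$ — and asserts that, in the regime $n\ge 3t+2$, the spectral radius is largest when $s=1$. So the job is to compare these graphs as $s$ ranges over $1,\dots,?$ (with $t$ fixed) and show $s=1$ dominates. The natural tool is the equitable quotient matrix: for $G(s):=K_s\vee(K_{n-2s-t+1}\cup\overline{K_{t+s-1}})$ the partition $\{V(K_s),\,V(K_{n-2s-t+1}),\,\overline{K_{t+s-1}}\}$ is equitable, so by Theorem~\ref{quotient} the spectral radius of $A_{\overline{\kappa}}(G(s))$ equals that of an explicit $3\times 3$ matrix $Q(s)$, whose entries I would write down from the $\kappa(u,v)$ values: within the big clique and between it and $S$ the connectivity is governed by clique structure, while an isolated vertex $w\in\overline{K_{t+s-1}}$ has $\kappa(w,\cdot)=s$ to everything (its degree is $s$). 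The plan is then to show $\rho(Q(1))\ge\rho(Q(s))$ for all admissible $s\ge 1$.

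**Key steps in order.** First, write out $\binom n2 Q(s)$ explicitly as a $3\times 3$ nonnegative matrix in the parameters $n,s,t$; call its blocks $a_{ij}(s)$. Second, rather than computing characteristic polynomials, I would use the Perron-vector comparison technique already deployed in Claim~\ref{mainclaim}: pass from $G(s)$ to $G(s-1)$ by moving one vertex out of $S$ (it becomes either part of the big clique or an isolated vertex — a vertex leaving $S$ joins the structure), and estimate $x^T(Q(s-1)-Q(s))x$ where $x$ is the Perron vector of $Q(s)$. The point is that shrinking $S$ by one vertex, while it lowers the connectivity contributed by $S$, enlarges $n-2s-t+1$ (the big clique grows by two, since one vertex leaves $S$ and, to keep $\alpha'$ fixed, the count rebalances), and the gain in clique-internal connectivity beats the loss; the inequality $n\ge 3t+2$ is exactly what makes the big clique large enough for this to go through. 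If the one-step move is awkward to set up cleanly, the fallback is the direct route: use Theorem~\ref{PerronCor} or a monotonicity argument on $\rho(Q(s))$ by treating $\rho$ as the largest root of $\det(\rho I - Q(s))=0$ and differentiating/finite-differencing in $s$, showing the relevant derivative is negative in the range $n\ge 3t+2$.

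**Main obstacle.** The genuinely delicate part is that moving a vertex off $S$ is not a single elementary operation on the quotient graph: decreasing $s$ forces $n-2s-t+1$ to increase by $2$ and $t+s-1$ to decrease by $1$ simultaneously (to preserve both $n$ and $\alpha'=\tfrac{n-t}2$), so the comparison $Q(s)$ versus $Q(s-1)$ changes all three block-sizes at once, and the sign of $x^T(Q(s-1)-Q(s))x$ is a genuine inequality in $n,s,t$ that must be verified, not a term-by-term positivity as in Claim~\ref{mainclaim}. I expect this to be where the hypothesis $n\ge 3t+2$ is consumed, and where one must use $\rho>n_1-1$-type bounds (from Observation~\ref{obs}, $\rho\ge 2\kappa(G)/n$, and $\rho\le T(G)$) to control the Perron entries. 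A secondary nuisance is the boundary of the range of $s$: one must check $s$ can be as small as $1$ and that $n-2s-t+1\ge 1$ throughout, i.e. the chain of graphs $G(s)\to G(s-1)\to\cdots\to G(1)$ stays inside the class of connected graphs with the prescribed matching number, so that Theorem~\ref{PerronCor} and Theorem~\ref{quotient} keep applying at every step.
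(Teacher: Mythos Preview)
Your plan is a genuine alternative to the paper's, but your primary route has a real technical gap, and your fallback is closer in spirit (though not in execution) to what the paper actually does.

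\medskip
\textbf{What the paper does.} The paper does \emph{not} iterate $s\to s-1$. It compares every $G(s)$ directly to $G_1=G(1)$ in one shot, via characteristic polynomials. Writing $f(\lambda)$ for the characteristic polynomial of $\binom n2 Q_0$ (the $3\times 3$ quotient of $G(s)$) and $f_1(\lambda)$ for that of $\binom n2 Q_1$, the paper observes that $K_{n-t}\subseteq G_1$ forces $\theta_1:=\rho\bigl(\binom n2 Q_1\bigr)\ge (n-t-1)^2$. So it suffices to show that the largest root of $f$ is below $(n-t-1)^2$, i.e.\ that $f((n-t-1)^2)>0$. This is done by brute-force polynomial algebra: expand $g(\lambda)=f(\lambda)-f_1(\lambda)$, evaluate $g((n-t-1)^2)+f_1((n-t-1)^2)$, and verify positivity by repeated differentiation in $n$ and evaluation at the boundary $n=3t+2$ (this is exactly where the hypothesis enters). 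The case $s=\tfrac{n-t}2$ is handled separately by solving $f$ explicitly. There is no Perron-vector comparison, no step-by-step move, and no control of eigenvector entries; the argument is purely an explicit real-root location for cubics.

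\medskip
\textbf{The gap in your primary route.} Your intended inequality ``$x^T(Q(s-1)-Q(s))x>0\Rightarrow\rho(Q(s-1))>\rho(Q(s))$'' does not follow as stated. The $3\times 3$ quotient matrices $Q(s)$ and $Q(s-1)$ are not symmetric, and they are quotients with respect to \emph{different} partitions (block sizes $(s,\,n-2s-t+1,\,t+s-1)$ versus $(s-1,\,n-2s-t+3,\,t+s-2)$), so Rayleigh--Ritz does not apply to them directly, and the diagonal symmetrizations $D^{1/2}QD^{-1/2}$ use different $D$'s. To make the move rigorous you must go back to the full $n\times n$ symmetric matrices $A_{\overline\kappa}(G(s))$ and $A_{\overline\kappa}(G(s-1))$: the Perron vector $z$ of the former is constant on the three blocks of $P(s)$, and you need $z^T\bigl(A_{\overline\kappa}(G(s-1))-A_{\overline\kappa}(G(s))\bigr)z>0$. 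Computing $z^TA_{\overline\kappa}(G(s-1))z$ requires a common refinement of $P(s)$ and $P(s-1)$ (five parts: split off one vertex from $S$ and one from the isolated set), so the comparison lives at the $5\times 5$ level, not $3\times 3$. This is doable, but it is not what you wrote, and you will still need entrywise control of the Perron entries on that refinement together with the hypothesis $n\ge 3t+2$; the bookkeeping is at least as heavy as the paper's polynomial calculus.

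\medskip
\textbf{Comparison.} Your iterative idea is more structural and could in principle localize where $n\ge 3t+2$ is needed, but it trades one long computation for a sequence of eigenvector-entry estimates across changing partitions. The paper's test-point method is cruder---pages of polynomial inequalities---but it is self-contained, needs no information about Perron vectors, and the hypothesis $n\ge 3t+2$ is consumed transparently at the boundary evaluation. Your fallback (``treat $\rho$ as the largest root of $\det(\rho I-Q(s))=0$ and finite-difference in $s$'') is the right instinct; the paper's version of it is to fix a concrete test value $(n-t-1)^2$ rather than to differentiate, which is what makes the computation tractable.
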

	
	\begin{proof}
		The quotient matrix $Q_0$ corresponding to the vertex partition $\{V(K_s),V(K_{n-2s-t+1}),\allowbreak V(\overline{K_{t+s-1}})\}$ is
		
		\[\begin{pmatrix}n\\2\end{pmatrix}Q_0 = \begin{pmatrix}
			(n-1)(s-1) & (n-s-t)(n-2s-t+1)&s(s+t-1)\\
			(n-s-t)s & (n-s-t)(n-2s-t)&s(s+t-1)\\
			s^2&s(n-2s-t+1)&s(s+t-2)
		\end{pmatrix}\]
		and the quotient matrix $Q_1$ corresponding to the vertex partition $\{V(K_1),V(K_{n-1}),\allowbreak V(\overline{K_{t}})\}$ is
		
		\[\begin{pmatrix}n\\2\end{pmatrix}Q_1 = \begin{pmatrix}
			0&(n-t-1)^2&t\\
			n-t-1&(n-t-1)(n-t-2)&t\\
			1&n-t-1&t-1
		\end{pmatrix}.\] 
		
		\quad
		
		\noindent\textit{Case 1: }If $s \le \frac{n-t}{2} - 1$, let $f(\lambda)$ be the characteristic polynomial of $\begin{pmatrix}n\\2\end{pmatrix}Q_0$ and $f_1(\lambda)$ be the characteristic polynomial of $\begin{pmatrix}n\\2\end{pmatrix}Q_1$. Suppose that the coefficient of $\lambda^3$ in each $f(\lambda)$ and $f_1(\lambda)$ is positive without loss of generality.
		
		If $\theta_1$ is the largest root of $f_1(\lambda)$, since $K_{n-t}$ is a subgraph of $G_1$
		\[\theta_1 \ge \rho\left({n\choose 2}A_{\overline{\kappa}}(K_{n-t})\right) = (n-t-1)^2.\]
		
		As a result, if $f((n-t-1)^2) > 0$, which implies $(n-t-1)^2$ is larger than the largest root of $f$, then $\rho(A_{\overline{\kappa}}(G_1))\ge\rho(A_{\overline{\kappa}}(G))$. Let $g(\lambda) = f(\lambda) - f_1(\lambda)$. We shall prove that $f((n-t-1)^2) > 0$ by proving that $g((n-t-1)^2) + f_1((n-t-1)^2) > 0$.
		Given below is the explicit formula for $g(\lambda)$:
		
		\begin{align*}
			&g(\lambda) =\lambda^2(s - 1)(2n - 4t - 3s) +\lambda(s - 1)(2t^3 - 4t^2n + 9t^2s + 5t^2 + 2tn^2 - 10tns - 6tn + 12ts^2\\& - 2ts + 2t+ 2n^2s + n^2 - 6ns^2+  ns +  5s^3 - 5s^2 + s - 1)	-(s - 1)(t^4s + t^4 - 2t^3ns - 3t^3n\\& + 6t^3s^2 + 2t^3s + 3t^3 + t^2n^2s + 3t^2n^2 - 8t^2ns^2 - 5t^2ns - 5t^2n + 13t^2s^3 - 6t^2s^2 + 3t^2s + t^2 - tn^3\\& + 2tn^2s^2 + 4tn^2s + tn^2 - 10tns^3 + 4tns^2 - 2tns	 + 2tn + 12ts^4 - 16ts^3 + 6ts^2 - 2ts - 2t - n^3s +\\& n^3 + n^2s^3 + n^2s^2 - n^2s - 3n^2 - 4ns^4 + 6ns^3 - 3ns^2 + 3ns + 3n + 4s^5 - 9s^4 + 6s^3 - 2s^2 - s - 1).
		\end{align*}
		
		If $s = 1$, then $g(\lambda) = 0$ and $G\cong G_1$, the claim becomes trivial, so we restrict $s\ge 2$. Let $\tilde{g}(\lambda) = \frac{g(\lambda)}{s - 1}$, then $g(\lambda) \ge \tilde{g}(\lambda)$. Consider $\Tilde{g}((n-t-1)^2)$ as a function $h(s)$ of $s$. We can prove that $h(s) \ge h\left(\frac{n-t}{2} - 1\right)$ for all $s \le \frac{n-t}{2} - 1$. Indeed
		
		\begin{align*}
			&h(s) - h\left(\frac{n-t}{2}-1\right) = \frac{11n^5}{8} - n^4s - \frac{43n^4t}{8} - \frac{133n^4}{16} - 6n^3s^2 - 2n^3st + 10n^3s + \frac{29n^3t^2}{4}\\& + \frac{63n^3t}{2} + \frac{111n^3}{8} + 4n^2s^3 + 22n^2s^2t + 6n^2s^2 + 12n^2st^2 - 20n^2st - 16n^2s - \frac{13n^2t^3}{4} - \frac{343n^2t^2}{8}\\&  - \frac{375n^2t}{8} - \frac{13n^2}{2} + 4ns^4 - 16ns^3 - 22ns^2t^2 - 30ns^2t + 7ns^2 - 14nst^3 + 8nst^2 + 32nst + 8ns - \\& \frac{5nt^4}{8} + \frac{49nt^3}{2} + \frac{421nt^2}{8} + 12nt + 8n - 4s^5 - 12s^4t + 9s^4 - 8s^3t^2 + 26s^3t - s^3 + 6s^2t^3 + 25s^2t^2\\& - 4s^2t - 3s^2 + 5st^4 + 2st^3 - 15st^2 - 10st - s + \frac{5t^5}{8} - \frac{77t^4}{16} - \frac{157t^3}{8} - \frac{19t^2}{2} + 5t - 12
		\end{align*}
		
		\begin{align*}
			=&\left(\frac{n-t}{2} - s - 1\right)\left(\frac{11n^4}4 + \frac{7n^3s}2 - 8n^3t - \frac{89n^3}8 - 5n^2s^2 - \frac{33n^2st}2 + \frac{19n^2s}4 + \frac{13n^2t^2}2\right.\\
			&+ \frac{287n^2t}8 + \frac{11n^2}2 - 2ns^3 + 6ns^2t + \frac{23ns^2}2 + \frac{41nst^2}2 + \frac{7nst}2 - \frac{23ns}2 - \frac{295nt^2}8 - \frac{33nt}2\\
			&- 2n + 4s^4 + 10s^3t - 13s^3 + 3s^2t^2 - \frac{59s^2t}2 + 14s^2 - \frac{15st^3}2 - \frac{53st^2}4 + \frac{53st}2 - 11s - \frac{5t^4}4\\&\left.\vphantom{\frac12} + \frac{97t^3}8 + 15t^2 - 11t + 12\right)
		\end{align*}
		\begin{multline*}
			=\left(\frac{n-t}{2} - s - 1\right)A.\hfill
		\end{multline*}
		
		Now we prove that $A > 0$. We abuse notions slightly by multiplying $A$ by 8 to make all coefficients whole numbers, which makes for a cleaner presentation but still denoting the resulting sum by $A$.
		\begin{align*}
			A =& 22n^4 + 28n^3s - 64n^3t - 89n^3 - 40n^2s^2 - 132n^2st + 38n^2s + 52n^2t^2 + 287n^2t + 44n^2\\& - 16ns^3 + 48ns^2t + 92ns^2 + 164nst^2 + 28nst - 92ns - 295nt^2 - 132nt - 16n + 32s^4\\& + 80s^3t - 104s^3 + 24s^2t^2 - 236s^2t + 112s^2 - 60st^3 - 106st^2 + 212st - 88s - 10t^4\\& + 97t^3 + 120t^2 - 88t + 96.
		\end{align*}
		
		\begin{align*}
			&\frac{\partial A}{\partial n} = 88n^3 + 84n^2s - 192n^2t - 267n^2 - 80ns^2 - 264nst + 76ns + 104nt^2 + 574nt + 88n - 16s^3,\\
			& + 48s^2t + 92s^2 + 164st^2 + 28st - 92s - 295t^2 - 132t - 16,\\
			&\frac{\partial^2 A}{\partial n^2} = 264n^2 + 168ns - 384nt - 534n - 80s^2 - 264st + 76s + 104t^2 + 574t + 88,\\
			&\frac{\partial^3 A}{\partial n^3} = 528n + 168s - 384t - 534.
		\end{align*}
		From $n\ge3t+2$ and $n\ge t + 2s + 2$, we have $n\ge 2t + s + 2$ and so
		
		\begin{align*}
			&\frac{\partial^3 A}{\partial n^3} > 0,\\
			&\frac{\partial^2 A}{\partial n^2} \ge \frac{\partial^2 A}{\partial n^2}(2t + s + 2) = 352s^2 + 744st + 934s + 392t^2 + 850t + 76 > 0,\\
			&\frac{\partial A}{\partial n} \ge \frac{\partial A}{\partial n}(2t + s + 2) = 76s^3 + 296s^2t + 605s^2 + 364st^2 + 1174st + 472s + 144t^3 + 569t^2 +\\
			& 400t - 204 > 0,\\
			&A\ge A(2t + s + 1) = 26s^4 + 84s^3t + 89s^3 + 128s^2t^2 + 473s^2t + 570s^2 + 108st^3 + 615st^2 + 932st\\
			& - 100s + 38t^4 + 231t^3 + 386t^2 - 124t - 120 > 0.
		\end{align*}
		
		Therefore, $A > 0$, and consequently
		\begin{align*}
			&\tilde{g}((n-t-1)^2) + f_1((n-t-1)^2) \ge h\left(\frac{n-t}{2}-1\right) + f_1((n-t-1)^2) = \frac{5n^5}8 - \frac{37n^4t}8 + \frac{21n^4}{16}\\& + \frac{51n^3t^2}4 - \frac{7n^3t}2 - \frac{39n^3}8 - \frac{67n^2t^3}4 + \frac{7n^2t^2}8 + \frac{135n^2t}8 + \frac{3n^2}2 + \frac{85nt^4}8 + \frac{7nt^3}2 - \frac{157nt^2}8 + 2nt\\& - 7n - \frac{21t^5}8 - \frac{35t^4}{16} + \frac{61t^3}8 + \frac{t^2}2 - 7t + 12.
		\end{align*}
		
		Consider the right hand side of the above inequality a function of $n$
		\begin{align*}
			&RHS'(n) = \frac{25n^4}8 - \frac{37n^3t}2 + \frac{21n^3}4 + \frac{153n^2t^2}4 - \frac{21n^2t}2 - \frac{117n^2}8 - \frac{67nt^3}2 + \frac{7nt^2}4 + \frac{135nt}4 + 3n\\& + \frac{85t^4}8 + \frac{7t^3}2 - \frac{157t^2}8 + 2t - 7,\\
			&RHS''(n) = \frac{25n^3}2 - \frac{111n^2t}2 + \frac{63n^2}4 + \frac{153nt^2}2 - 21nt - \frac{117n}4 - \frac{67t^3}2 + \frac{7t^2}4 + \frac{135t}4 + 3,\\
			&RHS^{(3)}(n) = \frac{75n^2}2 - 111nt + \frac{63n}2 + \frac{153t^2}2 - 21t - \frac{117}4,\\
			&RHS^{(4)}(n) = 75n - 111t + \frac{63}2 > 0.
		\end{align*}
		Therefore, since $n\ge 3t + 2$
		\begin{align*}
			&RHS^{(3)}(n)\ge RHS^{(3)}(3t + 2) = 81t^2 + \frac{603t}2 + \frac{735}4> 0,\\
			&RHS''(n)\ge RHS''(3t + 2) = 34t^3 + \frac{485t^2}2 + 321t + \frac{215}2> 0,\\
			&RHS'(n)\ge RHS'(3t + 2) = 8t^4 + 124t^3 + 273t^2 + 202t + \frac{65}2> 0,\\
			&RHS(n)\ge RHS(3t + 2) = 44t^4 + 149t^3 + 189t^2 + 60t + 6 > 0.
		\end{align*}
		To sum up, $0 < \tilde{g}((n-t-1)^2) + f_1((n-t-1)^2) \le g((n-t-1)^2) + f_1((n-t-1)^2) = f((n-t-1)^2)$.
		
		\quad
		
		\noindent\textit{Case 2: }If $s = \frac{n-t}{2}$, the largest root of $f(\lambda)$ can be computed explicitly as
		\begin{align*}
			&\rho(f(\lambda)) = - \frac{nt}4 + \frac{3n^2}8 - \frac{t^2}8 + \frac12 
			+ \frac{t}2 - n + \\&\frac{1}{8}\sqrt{5n^4 - 12n^3t - 8n^3 + 6n^2t^2 + 16n^2t + 24n^2 + 4nt^3 - 8nt^2 - 16nt - 32n - 3t^4 + 8t^2 + 16}.
		\end{align*}
		
		We can prove that $\rho(f(\lambda)) \le (n-t-1)^2$ directly by considering
		\begin{align*}
			& 8(n-t-1)^2 - 8\rho(f(\lambda)) = 12t - 8n - 14nt + 5n^2 + 9t^2 + 4 - \\&\sqrt{5n^4 - 12n^3t - 8n^3 + 6n^2t^2 + 16n^2t + 24n^2 + 4nt^3 - 8nt^2 - 16nt - 32n - 3t^4 + 8t^2 + 16}.
		\end{align*}
		Since $12t - 8n - 14nt + 5n^2 + 9t^2 + 4 > 0$, if we can prove that $(12t - 8n - 14nt + 5n^2 + 9t^2 + 4)^2 - (5n^4 - 12n^3t - 8n^3 + 6n^2t^2 + 16n^2t + 24n^2 + 4nt^3 - 8nt^2 - 16nt - 32n - 3t^4 + 8t^2 + 16) \ge 0$, we will have proved that $(n-t-1)^2 \ge \rho(A_{\overline{\kappa}}(G))$. Indeed
		\begin{align*}
			&(12t - 8n - 14nt + 5n^2 + 9t^2 + 4)^2 - (5n^4 - 12n^3t - 8n^3 + 6n^2t^2 + 16n^2t + 24n^2 + 4nt^3 - 8nt^2\\
			&- 16nt - 32n - 3t^4 + 8t^2 + 16)\\
			=&20n^4 - 128n^3t - 72n^3 + 280n^2t^2 + 328n^2t + 80n^2 - 256nt^3 - 472nt^2 - 288nt - 32n + 84t^4 + 216t^3 \\&+ 208t^2 + 96t
		\end{align*}
		
		Consider the right hand side of the above equality a function of $n$
		\begin{align*}
			&RHS'(n) = 80n^3 - 384n^2t - 216n^2 + 560nt^2 + 656nt + 160n - 256t^3 - 472t^2 - 288t - 32,\\
			&RHS''(n) = 240n^2 - 768nt - 432n + 560t^2 + 656t + 160\\
			&RHS^{(3)}(n) = 480n - 768t - 432. > 0
		\end{align*}
		\begin{align*}
			\Rightarrow&RHS''(n)\ge RHS''(3t + 2) = 416t^2 + 704t + 256> 0,\\
			&RHS'(n)\ge RHS'(3t + 2) = 128t^3 + 384t^2 + 256t + 64> 0,\\
			&RHS(n)\ge RHS(3t + 2) = 64t^3 > 0.
		\end{align*}
		
		Hence, for any value of $s$, if $n\ge 3t + 2$ then $\rho(A_{\overline{\kappa}}(G))\le\frac{1}{\begin{pmatrix}
				n\\2
		\end{pmatrix}}(n-t-1)^2\le\rho(A_{\overline{\kappa}}(G_1))$.
		
	\end{proof}
	
	\begin{claim}\label{extremalclaim2}
		If  $n \le 3t$, then $\rho(A_{\overline{\kappa}}(K_s\vee({K_{n-2s-t+1}\cup \overline{K_{t+s-1}}}))) \le \rho(A_{\overline{\kappa}}(G_2))$, where $G_2 = K_{\frac{n-t}{2}}\vee{\overline{K_{\frac{n+t}{2}}}}.$
	\end{claim}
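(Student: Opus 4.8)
The plan is to re-run the argument of Claim~\ref{extremalclaim1}, now comparing a general member of the family produced by Claim~\ref{mainclaim} against the member $s=\tfrac{n-t}2$, which is $G_2$ itself. Write $s_0:=\tfrac{n-t}2$ and $a_0:=\tfrac{n+t}2$. Setting $s=s_0$ gives $n-2s-t+1=1$, so $K_s\vee(K_{n-2s-t+1}\cup\overline{K_{t+s-1}})=K_{s_0}\vee(K_1\cup\overline{K_{a_0-1}})=K_{s_0}\vee\overline{K_{a_0}}=G_2$; and, since $s\le\alpha'(G)=s_0$ always, it suffices to show that among all admissible $s$ (that is, $1\le s\le s_0$) the quantity $\rho(A_{\overline\kappa}(K_s\vee(K_{n-2s-t+1}\cup\overline{K_{t+s-1}})))$ is largest at $s=s_0$; note this is \emph{not} a monotonicity statement in $s$, only a statement about the maximizer. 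As in Claim~\ref{extremalclaim1}, for each admissible $s$ the partition $\{V(K_s),V(K_{n-2s-t+1}),V(\overline{K_{t+s-1}})\}$ is equitable, so by Theorem~\ref{quotient} we may replace that spectral radius by $\tfrac1{{n\choose 2}}r(f_s)$, where $f_s(\lambda)$ is the characteristic polynomial of ${n\choose 2}Q_0$ (the cubic displayed in Claim~\ref{extremalclaim1}, whose lower coefficients are polynomials in $n,t,s$) and $r(f_s)$ is its largest root. For $G_2$ we use instead the coarser (still equitable) two-part partition, whose quotient matrix is
\[
{n\choose 2}Q_{G_2}=\begin{pmatrix}(s_0-1)(n-1)&a_0s_0\\[3pt]s_0^2&(a_0-1)s_0\end{pmatrix},
\]
so $\rho(A_{\overline\kappa}(G_2))=\tfrac1{{n\choose 2}}\theta_2$, where $\theta_2$ is the larger root of the quadratic $\lambda^2-\big((s_0-1)(n-1)+(a_0-1)s_0\big)\lambda+\big((s_0-1)(n-1)(a_0-1)s_0-a_0s_0^3\big)$, given by the quadratic formula. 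Since that quotient matrix is $2$-equivalent (via Theorem~\ref{quotient}) to ${n\choose 2}Q_0\big|_{s=s_0}$, the number $\theta_2$ is also a root of $f_{s_0}$.

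It remains to prove $\theta_2\ge r(f_s)$ for $1\le s\le s_0-1$ (the value $s=s_0$ being an equality). I would do this exactly as in Case~1 of Claim~\ref{extremalclaim1}: verify that $f_s(\theta_2)\ge 0$ and, as there, that $\theta_2$ lies to the right of the largest local minimum of the cubic $f_s$; these two facts together force $\theta_2\ge r(f_s)$, and hence $\rho(A_{\overline\kappa}(K_s\vee(K_{n-2s-t+1}\cup\overline{K_{t+s-1}})))=\tfrac1{{n\choose 2}}r(f_s)\le\tfrac1{{n\choose 2}}\theta_2=\rho(A_{\overline\kappa}(G_2))$. For the inequality $f_s(\theta_2)\ge 0$, write $g_s(\lambda):=f_{s_0}(\lambda)-f_s(\lambda)$; since $f_{s_0}(\theta_2)=0$ this is the same as $g_s(\theta_2)\le 0$. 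Every coefficient of $g_s$, read as a polynomial in $s$ with $n,t$ as parameters, vanishes at $s=s_0$, so $(s_0-s)$ factors out, and (using $s_0-s\ge 1>0$) the task reduces to the positivity of a single explicit polynomial $A=A(n,t,s)$ on the region $1\le s\le s_0-1$, $\,2s+t+2\le n\le 3t$, $\,t\ge 2$. This I would settle the same way as Case~1 of Claim~\ref{extremalclaim1}: differentiate $A$ repeatedly in $n$, sign the top derivative using $n\le 3t$ and $n\ge 2s+t+2$, and push the sign downward by evaluating the successive partial antiderivatives at the endpoints $n=3t$ and $n=2s+t+2$.

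The main obstacle is twofold. First, because $G_2$ now corresponds to the endpoint $s=s_0$ rather than to a perfect square eigenvalue (as $(n-t-1)^2$ was in Claim~\ref{extremalclaim1}), the value $\theta_2$ carries a genuine radical; substituting it into the cubic $f_s$ yields an expression of the shape $P(n,t,s)+R(n,t,s)\sqrt{D(n,t,s)}$, and proving $g_s(\theta_2)\le 0$ requires the standard but bulky step of isolating the surd, fixing the sign of the rational part, and comparing squares, roughly doubling the degrees involved relative to Claim~\ref{extremalclaim1}. (One may instead eliminate $\theta_2$ between $f_s$ and its defining quadratic via a resultant, replacing the radical by a larger radical-free inequality.) Second, one must make sure that the implication ``$f_s(\theta_2)\ge 0\Rightarrow\theta_2\ge r(f_s)$'' is valid for \emph{all} $s$ up to $s_0-1$: when $s$ is near $s_0$ the graph is close to $G_2$ and the margin $\theta_2-r(f_s)$ is thin, so the ``largest local minimum'' comparison must be done with care — and it is precisely this comparison that fails once $n\ge 3t+2$, which is why the different extremal graph $G_1$ of Claim~\ref{extremalclaim1} is needed there. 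Everything else is bookkeeping of the same flavour as the preceding proof.
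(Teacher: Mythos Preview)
Your high-level plan --- compare the characteristic polynomial $f_s$ of ${n\choose 2}Q_0$ against the spectrum of $G_2$ by showing $f_s(\theta_2)\ge 0$, factor out $(s_0-s)$, and sign the remaining polynomial by differentiating and evaluating at the endpoints of the range --- is exactly the paper's strategy. Two points, however, separate your sketch from a workable proof, and the paper handles both cleanly.

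First, you propose to attack the radical in $\theta_2$ by isolating and squaring (or by a resultant). The paper avoids this entirely with two tricks. (i)~It does \emph{not} compute $f_s-f_{s_0}$; instead it forms $g_2(\lambda)=f_s(\lambda)+(c-1)f_2(\lambda)$ with $c$ chosen so that $g_2$ is \emph{linear} in $\lambda$ (here $f_2$ is the quadratic characteristic polynomial of the $2\times2$ quotient matrix of $G_2$). Since $f_2(\theta_2)=0$, proving $g_2(\theta_2)>0$ is equivalent to $f_s(\theta_2)>0$, but now only a single evaluation of a degree-one polynomial is needed. (ii)~Rather than substitute the surd, the paper observes that the hypothesis $n\le 3t$ is exactly $b\ge 2a$ in the variables $a=\tfrac{n-t}2$, $b=\tfrac{n+t}2$, and uses this to produce a \emph{radical-free} lower bound $\theta_2'\le\theta_2$; since the coefficient of $\lambda$ in $\tilde g_2$ is shown to be positive, $\tilde g_2(\theta_2)\ge\tilde g_2(\theta_2')$, and only $\tilde g_2(\theta_2')>0$ --- a pure polynomial inequality --- needs to be checked. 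This is precisely where $n\le 3t$ enters, which you did not identify. Your squaring route would work in principle but roughly doubles all degrees; the paper's combination of linearization plus the surd-free lower bound is what keeps the computation at the same scale as Claim~\ref{extremalclaim1}.

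Second, your worry that ``$f_s(\theta_2)\ge 0\Rightarrow\theta_2\ge r(f_s)$'' needs a separate local-minimum check is well taken; the paper does not address it explicitly here either, relying (as in Claim~\ref{extremalclaim1}) on the fact that $\theta_2$ already dominates an obvious lower bound for $r(f_s)$ coming from a large clique inside the graph. If you want to be careful, note that $K_{n-s-t+1}$ sits inside $K_s\vee(K_{n-2s-t+1}\cup\overline{K_{t+s-1}})$, so $r(f_s)\ge (n-s-t)^2$, and verifying $\theta_2\ge(n-s-t)^2$ (a one-line quadratic inequality) suffices to rule out the spurious interval.
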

	
	\begin{proof}
		The quotient matrix $Q_2$ corresponding to the vertex partition $\left\{V(K_{\frac{n-t}{2}}),V(\overline{K_{\frac{n+t}{2}}})\right\}$ is
		
		\[\begin{pmatrix}n\\2\end{pmatrix}Q_2 = \begin{pmatrix}\frac{(n-1)(n-t-2)}{2}&\frac{n^2-t^2}{4}\\\frac{(n-t)^2}{4}&\frac{(n-t)(n+t-2)}{4}\end{pmatrix}.\]
		
		Let $f(\lambda)$ be the characteristic polynomial of $\begin{pmatrix}n\\2\end{pmatrix}Q_0$ defined in Claim \ref{extremalclaim1} and $f_2(\lambda)$ be the characteristic polynomial of $\begin{pmatrix}n\\2\end{pmatrix}Q_2$. If $\theta_2$ is the largest root of $f_2(\lambda)$ and it's larger than that of $f(\lambda)$, then
		\begin{align*}
			&f_2(\theta_2) = 0,\\
			&f(\theta_2) > 0.
		\end{align*}
		
		Therefore, let 
		
		\[g_2(\lambda) = f(\lambda) - f_2(\lambda) + \frac{1}{4}(5t^2 - 6tn + 16ts - 4t + n^2 - 8ns + 4n + 12s^2 - 12s)f_2(\lambda).\]
		We can prove that $f(\theta_2) > 0$ by proving $g_2(\theta_2) > 0$. Given below is the explicit formula for $g_2(\lambda)$, which is a linear function with respect to $\lambda$:
		
		\begin{align*}
			g_2(\lambda) = &\frac{1}{8}\lambda(n-t-2s)(8tn^2 - 2t^3 - 4t^2n - 20t^2s + 11t^2 + 8tns - 10tn - 38ts^2 + 48ts - 6t - 2n^3\\& + 8n^2s - 5n^2 + 14ns^2 - 32ns + 14n - 20s^3 + 40s^2 - 12s - 4)+\frac{1}{64}(n - t - 2s)(n^5 - 5t^5 + t^4n\\
			& - 6t^4s + 14t^4 + 10t^3n^2 - 12t^3n + 64t^3s^2 + 16t^3s - 8t^3 - 2t^2n^3 + 12t^2n^2s - 56t^2n^2 - 64t^2ns^2\\
			& - 72t^2ns + 88t^2n + 256t^2s^3 - 264t^2s^2 + 56t^2s - 40t^2 - 5tn^4 + 60tn^3 - 56tn^2 - 128tns^3\\
			& + 48tns^2 + 160tns - 16tn + 320ts^4 - 688ts^3 + 464ts^2 - 176ts + 32t - 6n^4s - 6n^4 + 56n^3s\\
			& - 24n^3 + 56n^2s^2 - 184n^2s + 56n^2 - 64ns^4 + 112ns^3 - 48ns^2 + 112ns - 32n + 128s^5 - 416s^4\\
			& + 480s^3 - 256s^2 + 32s).
		\end{align*}
		Since $n-t\ge 2s$, we analyze the sign of 
		
		\begin{align*}
			\tilde{g_2}(\lambda) = &\frac{1}{8}\lambda(8tn^2 - 2t^3 - 4t^2n - 20t^2s + 11t^2 + 8tns - 10tn - 38ts^2 + 48ts - 6t - 2n^3 + 8n^2s - 5n^2\\& + 14ns^2 - 32ns + 14n - 20s^3 + 40s^2 - 12s - 4) +\frac{1}{64}(n^5 - 5t^5 + t^4n - 6t^4s + 14t^4 + 10t^3n^2\\& - 12t^3n + 64t^3s^2 + 16t^3s - 8t^3 - 2t^2n^3 + 12t^2n^2s - 56t^2n^2 - 64t^2ns^2 - 72t^2ns + 88t^2n\\& + 256t^2s^3 - 264t^2s^2 + 56t^2s - 40t^2 - 5tn^4 + 60tn^3 - 56tn^2 - 128tns^3 + 48tns^2 + 160tns\\& - 16tn + 320ts^4 - 688ts^3 + 464ts^2 - 176ts + 32t - 6n^4s - 6n^4 + 56n^3s - 24n^3 + 56n^2s^2\\& - 184n^2s + 56n^2 - 64ns^4 + 112ns^3 - 48ns^2 + 112ns - 32n + 128s^5 - 416s^4 + 480s^3 - 256s^2\\& + 32s).
		\end{align*}
		
		Let $a = \frac{n-t}{2}, b = \frac{n+t}{2}$, then
		
		\begin{align*}
			\tilde{g_2}(\lambda) = &\lambda(- 3a^3 - 4a^2b - 5a^2s + 4a^2 + 3ab^2 + 14abs - 8ab + 13as^2 - 20as + 5a - b^2s - b^2 - 6bs^2 + 4bs \\&+ 2b - 5s^3 + 10s^2 - 3s - 1) + (- 3a^4 + 3a^3b^2 - 7a^3b - 4a^3s^2 - a^3s + 4a^3 - 2a^2b^3 - 3a^2b^2s + 5a^2b^2\\& + 8a^2bs^2 + 9a^2bs - 4a^2b + 12a^2s^3 - 8a^2s^2 - 9a^2s + a^2 + 2ab^3 - 4ab^2s^2 + 6ab^2s - 6ab^2 - 16abs^3\\& + 20abs^2 - 15abs + 6ab - 12as^4 + 25as^3 - 16as^2 + 9as - 2a + 4b^2s^3 - 5b^2s^2 + b^2s + 8bs^4 - 18bs^3\\& + 13bs^2 - 2bs + 4s^5 - 13s^4 + 15s^3 - 8s^2 + s).
		\end{align*}
		$\theta_2$ can be computed from the quotient matrix
		
		\[\begin{pmatrix}n\\2\end{pmatrix}Q_{2} = \begin{pmatrix}
			(a+b-1)(a-1) & ab\\
			a^2 & a(b-1)
		\end{pmatrix}\]
		to be
		\[\theta_2 = \frac{a^2}{2} + \frac{1}{2} + ab - \frac{b}{2} - \frac{3a}{2} + \frac12\sqrt{a^4 + 4a^3b - 2a^3 - 2a^2b + 3a^2 + 2ab - 2a + b^2 - 2b + 1}.\]
		
		Since $n\le 3t$, we have $b\ge 2a$. That implies
		
		\[\theta_2\ge\theta_2' = \frac{a^2}{2} + \frac{1}{2} + ab - \frac{b}{2} - \frac{3a}{2} + \frac12(3a^2 - a).\]
		
		Indeed
		
		\begin{align*}
			&(a^4 + 4a^3b - 2a^3 - 2a^2b + 3a^2 + 2ab - 2a + b^2 - 2b + 1) - (3a^2 - a)^2\\
			=&- 8a^4 + 4a^3b + 4a^3 - 2a^2b + 2a^2 + 2ab - 2a + b^2 - 2b + 1:=h(b).
		\end{align*}
		Since $a,b\ge 1$, $h'(b) = 4a^3 - 2a^2 + 2a + 2b - 2 > 0$, therefore $h(b) \ge h(2a) = 10a^2 - 6a + 1 > 0$. We can prove that $\tilde{g_2}(\theta_2) \ge \tilde{g_2}(\theta_2') > 0$. First, the coefficient $c_1$ of $\lambda$ in $\tilde{g_2}(\lambda)$ is positive
		
		\begin{align*}
			&c_1 = - 3a^3 - 4a^2b - 5a^2s + 4a^2 + 3ab^2 + 14abs - 8ab + 13as^2 - 20as + 5a - b^2s - b^2 - 6bs^2 + 4bs + 2b\\& - 5s^3 + 10s^2 - 3s - 1,\\
			&\frac{\partial c_1}{b} = 4s - 2b - 8a + 6ab + 14as - 2bs - 4a^2 - 6s^2 + 2,\\
			&\frac{\partial^2c_1}{\partial b^2} = 6a - 2s - 2 > 0
		\end{align*}
		therefore
		\begin{align*}
			&\frac{\partial c_1}{b}\ge\frac{\partial c_1}{b}(2a) = 8a^2 + 10as - 12a - 6s^2 + 4s + 2 = (6a^2 - 6s^2) + (2a^2 + 10as - 12s) + 2 \ge 0,\\
			&c_1\ge c_1(2a) = a^3 + 19a^2s - 16a^2 + as^2 - 12as + 9a - 5s^3 + 10s^2 - 3s - 1.
		\end{align*}
		
		Consider the right hand side of the above inequality a function of $a$, since $a\ge s\ge 1$
		\begin{align*}
			&RHS'(a) = 3a^2 + 38as - 32a + s^2 - 12s + 9,\\
			&RHS''(a) = 6a + 38s - 32 > 0\\
			\Rightarrow &RHS'(a) \ge RHS'(s) = 42s^2 - 44s + 9 > 0,\\
			\Rightarrow &RHS(a) \ge RHS(s) = 16s^3 - 18s^2 + 6s - 1 > 0.
		\end{align*}
		
		It remains to show that $\tilde{g_2}(\theta_2') > 0$.
		
		\begin{align*}
			\tilde{g_2}(\theta_2') = &- 6a^5 - 11a^4b - 10a^4s + 11a^4 + 5a^3b^2 + 23a^3bs - \frac{19a^3b}2 + 22a^3s^2 - 31a^3s + \frac{9a^3}2 + a^2b^3 + 9a^2b^2s\\& - 9a^2b^2 + 9a^2bs^2 - \frac{57a^2bs}2 + 17a^2b + 2a^2s^3 - 14a^2s^2 + \frac{45a^2s}{2} - 9a^2 - ab^3s - \frac{ab^3}2 - 10ab^2s^2\\& + 5ab^2s + \frac{7ab^2}2 - 21abs^3 + \frac{71abs^2}2 - 9abs - \frac{11ab}2 - 12as^4 + 35as^3 - \frac{59as^2}{2} + 5as + \frac{5a}2 + \frac{b^3s}2\\& + \frac{b^3}2 + 4b^2s^3 - 2b^2s^2 - \frac{3b^2s}2 - \frac{3b^2}2 + 8bs^4 - \frac{31bs^3}2 + 5bs^2 + \frac{3bs}2 + \frac{3b}2 + 4s^5 - 13s^4 + \frac{25s^3}2 - 3s^2\\& - \frac{s}2 - \frac{1}2,\\
			\frac{\partial \tilde{g_2}}{\partial b} =& - 11a^4 + 10a^3b + 23a^3s - \frac{19a^3}2 + 3a^2b^2 + 18a^2bs - 18a^2b + 9a^2s^2 - \frac{57a^2s}2 + 17a^2 - 3ab^2s - \frac{3ab^2}2\\& - 20abs^2 + 10abs + 7ab - 21as^3 + \frac{71as^2}2 - 9as - \frac{11a}2 + \frac{3b^2s}2 + \frac{3b^2}2 + 8bs^3 - 4bs^2 - 3bs - 3b\\& + 8s^4 - \frac{31s^3}2 + 5s^2 + \frac{3s}2 + \frac32,\\
			\frac{\partial^2 \tilde{g_2}}{\partial b^2} =& 7a + 3b - 3s - 3ab + 10as + 3bs + 6a^2b - 20as^2 + 18a^2s - 18a^2 + 10a^3 - 4s^2 + 8s^3 - 6abs - 3,\\
			\frac{\partial^3 \tilde{g_2}}{\partial b^3} = &3s - 3a - 6as + 6a^2 + 3 > 0.
		\end{align*}
		
		Therefore
		
		\begin{align*}
			\frac{\partial^2 \tilde{g_2}}{\partial b^2} \ge \frac{\partial^2 \tilde{g_2}}{\partial b^2}(2a) = 22a^3 + 6a^2s - 24a^2 - 20as^2 + 16as + 13a + 8s^3 - 4s^2 - 3s - 3.
		\end{align*}
		Consider the right hand side a function of $a$, since $a\ge s\ge 1$
		\begin{align*}
			&RHS'(a) = 66a^2 + 12as - 48a - 20s^2 + 16s + 13 = (66a^2 - 48a - 8s^2) + (12as - 12s^2) + 16s + 13 > 0,\\
			\Rightarrow &RHS(a)\ge RHS(s) = 16s^3 - 12s^2 + 10s - 3 > 0.
		\end{align*}
		
		Now, since $\frac{\partial^2 \tilde{g_2}}{\partial b^2} > 0$
		\begin{align*}
			\frac{\partial \tilde{g_2}}{\partial b} \ge \frac{\partial \tilde{g_2}}{\partial b}(2a) = &21a^4 + 47a^3s - \frac{103a^3}2 - 31a^2s^2 - \frac{5a^2s}2 + 37a^2 - 5as^3 + \frac{55as^2}2 - 15as - \frac{23a}2 + 8s^4\\
			&- \frac{31s^3}2 + 5s^2 + \frac{3s}2 + \frac32.
		\end{align*}
		Consider the right hand side a function of $a$
		\begin{align*}
			&RHS'(a) = 84a^3 + 141a^2s - \frac{309a^2}2 - 62as^2 - 5as + 74a - 5s^3 + \frac{55s^2}2 - 15s - \frac{23}2,\\
			&RHS''(a) = 252a^2 + 282as - 309a - 62s^2 - 5s + 74 = (252a^2 - 252a) + (282as - 57a - 62s^2 - 5s)\\& + 74 > 0.\\
			\Rightarrow&RHS'(a)\ge RHS'(s) = 158s^3 - 132s^2 + 59s - 23/2 > 0,\\
			\Rightarrow &RHS(a) \ge RHS(s) = 40s^4 - 42s^3 + 27s^2 - 10s + \frac32 > 0 \text{ since }s\ge 1.
		\end{align*}
		
		Finally, since $\frac{\partial \tilde{g_2}}{\partial b} > 0$
		
		\begin{align*}
			\tilde{g_2}\ge \tilde{g_2}(2a) = &64a^4s - 48a^4 - 64a^3s + \frac{113a^3}2 - 24a^2s^3 + 49a^2s^2 - \frac{3a^2s}2 - 26a^2 + 4as^4 + 4as^3 - \frac{39as^2}2\\
			&+ 8as + \frac{11a}2 + 4s^5 - 13s^4 + \frac{25s^3}2 - 3s^2 - \frac{s}2 - \frac12.
		\end{align*}
		Consider the right hand side a function of $a$
		\begin{align*}
			&RHS'(a) = 256a^3s - 192a^3 - 192a^2s + \frac{339a^2}2 - 48as^3 + 98as^2 - 3as - 52a + 4s^4 + 4s^3 - \frac{39s^2}2 + 8s\\& + \frac{11}{2}\\
			&RHS''(a) = 768a^2s - 576a^2 - 384as + 339a - 48s^3 + 98s^2 - 3s - 52\\
			&RHS^{(3)}(a) = 1536as - 384s - 1152a + 339 > 0
		\end{align*}
		therefore
		\begin{align*}
			&RHS''(a)\ge RHS''(s) = 720s^3 - 862s^2 + 336s - 52 > 0,\\
			&RHS'(a) \ge RHS'(s) = 212s^4 - 282s^3 + 147s^2 - 44s + \frac{11}2 > 0,\\
			&RHS(a) \ge RHS(s) = 48s^5 - 72s^4 + 48s^3 - 21s^2 + 5s - \frac12 > 0.
		\end{align*}
		Thus, we have the desired proof of Claim \ref{extremalclaim2}
	\end{proof}
	
	We have proved that, for graphs with fixed matching number $\frac{n-t}{2}$, the extremal graph is either $G_1 = K_1\vee({K_{n-t-1}\cup \overline{K_{t}}})$ or $G_2 = K_{\frac{n-t}{2}}\vee{\overline{K_{\frac{n+t}{2}}}}$. We now compare the spectral radii of $G_1$ and $G_2$ to $\frac{4\alpha'(G)}{n}$.
	
	\begin{claim}
		\label{claim:bound1} $$\rho(A_{\overline{\kappa}}(G_1)) < \frac{2(n-t)}{n}.$$
	\end{claim}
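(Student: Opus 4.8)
The plan is to bound $\rho$ via the quotient matrix $Q_1$ of $A_{\overline{\kappa}}(G_1)$ already written down in Claim~\ref{extremalclaim1}, where $G_1=K_1\vee(K_{n-t-1}\cup\overline{K_t})$. First I would record that the vertex partition $\{V(K_1),V(K_{n-t-1}),V(\overline{K_t})\}$ of $G_1$ is equitable for $A_{\overline{\kappa}}(G_1)$. Writing $c$ for the vertex of the $K_1$, one checks directly that $\kappa(u,v)=\deg_{G_1}(u)=\deg_{G_1}(v)=n-t-1$ for every pair $u,v\in\{c\}\cup V(K_{n-t-1})$ (such endpoints have $n-t-2$ common neighbours inside the clique and are additionally joined through $c$, which already exhausts the degree bound), while $\kappa(u,w)=1$ as soon as one of $u,w$ lies in $V(\overline{K_t})$, since such a vertex has degree $1$. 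Consequently the row sums of $A_{\overline{\kappa}}(G_1)$ are constant within each pair of blocks, so Theorem~\ref{quotient} gives $\rho(A_{\overline{\kappa}}(G_1))=\rho(Q_1)$, and it suffices to show
$$\rho\!\left(\binom{n}{2}Q_1\right) < (n-1)(n-t)=\binom{n}{2}\cdot\frac{2(n-t)}{n}.$$

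Next I would bound $\rho\!\left(\binom{n}{2}Q_1\right)$ by the largest row sum of $\binom{n}{2}Q_1$. This is legitimate because $\binom{n}{2}Q_1$ is a nonnegative matrix: applying Theorem~\ref{Gctheo} to it, each Gershgorin disc has rightmost point equal to the corresponding full row sum, so every eigenvalue has real part at most $R:=\max_i(\text{$i$-th row sum})$, and since by Theorem~\ref{PF} the spectral radius $\rho$ is itself an eigenvalue, $\rho\le R$. Reading off the entries of $\binom{n}{2}Q_1$ from Claim~\ref{extremalclaim1}, the three row sums are
$$(n-t-1)^2+t,\qquad (n-t-1)^2+t,\qquad n-1.$$
Because $\alpha'(G_1)=\tfrac{n-t}{2}$ is a positive integer we have $n\ge t+2$, and then $(n-t-1)^2+t\ge n-1$, so $R=(n-t-1)^2+t$ and $\rho\!\left(\binom{n}{2}Q_1\right)\le (n-t-1)^2+t$.

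Finally I would close the gap using the identity
$$(n-1)(n-t)-\bigl[(n-t-1)^2+t\bigr]=(t+1)(n-t-1),$$
whose right-hand side is strictly positive since $t\ge 2$ and $n\ge t+2$. Chaining the two estimates,
$$\rho\!\left(\binom{n}{2}Q_1\right)\le (n-t-1)^2+t < (n-1)(n-t),$$
and dividing by $\binom{n}{2}$ yields $\rho(A_{\overline{\kappa}}(G_1))=\rho(Q_1)<\tfrac{2(n-t)}{n}$, which is the claim.

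I do not anticipate a real obstacle. The only points requiring some care are the verification that the partition is equitable — so that the quotient matrix carries the \emph{exact} spectral radius rather than merely an upper bound; if one prefers to sidestep this, Theorem~\ref{PerronCor} (or eigenvalue interlacing) already gives $\rho(A_{\overline{\kappa}}(G_1))\le\rho(Q_1)$ and the argument proceeds verbatim — and the bookkeeping of the three row sums. After that, the decisive inequality collapses to the displayed factorization $(t+1)(n-t-1)>0$.
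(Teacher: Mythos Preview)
Your argument is correct and considerably simpler than the paper's. The paper computes $\rho(A_{\overline{\kappa}}(G_1))$ in closed form from the characteristic polynomial of $\binom{n}{2}Q_1$, obtaining an expression with a square root, and then verifies $\rho<\tfrac{2(n-t)}{n}$ by squaring and checking that a degree-$4$ polynomial in $n$ is positive via successive derivatives under the hypothesis $n\ge 3t+2$. You instead bound $\rho\bigl(\binom{n}{2}Q_1\bigr)$ by its maximum row sum $(n-t-1)^2+t$ and reduce everything to the one-line factorization $(n-1)(n-t)-[(n-t-1)^2+t]=(t+1)(n-t-1)>0$; this is both shorter and valid on the wider range $n\ge t+2$, $t\ge 2$.

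Two small corrections. First, in your description of $\kappa(u,v)$ on $\{c\}\cup V(K_{n-t-1})$ you write $\deg_{G_1}(u)=\deg_{G_1}(v)=n-t-1$, but $\deg_{G_1}(c)=n-1$; the conclusion $\kappa(c,a)=n-t-1$ is still right (it is $\min\{\deg c,\deg a\}$ and is attained by the edge $ca$ together with the $n-t-2$ length-two paths through $A\setminus\{a\}$), so this is cosmetic. Second, your parenthetical fallback is wrong: eigenvalue interlacing for quotient matrices gives $\rho(Q_1)\le\rho(A_{\overline{\kappa}}(G_1))$, not the reverse, and Theorem~\ref{PerronCor} does not supply the needed direction either. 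You therefore genuinely need the equitability of the partition (which you correctly verified) so that Theorem~\ref{quotient} yields $\rho(A_{\overline{\kappa}}(G_1))=\rho(Q_1)$; once that is in place, the row-sum bound finishes the job.
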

	
	\begin{proof}
		From the quotient matrix given in the proof of Claim \ref{extremalclaim1}, we can compute
		\begin{align*}
			&\rho(A_{\overline{\kappa}}(G_1)) = \frac{1}{\begin{pmatrix}
					n\\2
			\end{pmatrix}}\left(\frac{3t}2 - n  - nt + \frac{n^2}2 + \frac{t^2}2 \right.\\&\left.+\frac{1}{2}\sqrt{n^4 - 4n^3t - 4n^3 + 6n^2t^2 + 10n^2t + 8n^2 - 4nt^3 - 8nt^2 - 8nt - 8n + t^4 + 2t^3 + t^2 + 4t + 4}\vphantom{\frac11}\right).
		\end{align*}
		Therefore
		\begin{align*}
			&\rho(A_{\overline{\kappa}}(G_1))< \frac{2(n-t)}{n}\\
			\Leftrightarrow &\sqrt{n^4 - 4n^3t - 4n^3 + 6n^2t^2 + 10n^2t + 8n^2 - 4nt^3 - 8nt^2 - 8nt - 8n + t^4 + 2t^3 + t^2 + 4t + 4}\\
			<&2\left(\begin{pmatrix}
				n\\2
			\end{pmatrix}\frac{2(n-t)}{n} - \left(\frac{3t}2 - n  - nt + \frac{n^2}2 + \frac{t^2}2\right)\right) = n^2 - t^2 - t\\
			\Leftrightarrow &0 < 4n^3t + 4n^3 - 8n^2t^2 - 12n^2t - 8n^2 + 4nt^3 + 8nt^2 + 8nt + 8n - 4t - 4.
		\end{align*}
		
		Consider the right hand side of the above inequality a function of $n$, since $n \ge 3t + 2$
		\begin{align*}
			&RHS'(n) = 12n^2t + 12n^2 - 16nt^2 - 24nt - 16n + 4t^3 + 8t^2 + 8t + 8,\\
			&RHS''(n) = 24n - 24t + 24nt - 16t^2 - 16 > 0
		\end{align*}
		\begin{align*}
			\Rightarrow & RHS'(n) \ge RHS'(3t + 2) = 64t^3 + 156t^2 + 104t + 24 > 0,\\
			&RHS(n) \ge RHS(3t + 2) = 48t^4 + 152t^3 + 152t^2 + 68t + 12 > 0
		\end{align*}
		which gives the proof of Claim \ref{claim:bound1}.
	\end{proof}
	
	\begin{claim}
		\label{claim:bound2} $$\rho(A_{\overline{\kappa}}(G_2)) < \frac{2(n-t)}{n}.$$
	\end{claim}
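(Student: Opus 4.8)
The plan is to exploit the fact that, for $G_2=K_{(n-t)/2}\vee\overline{K_{(n+t)/2}}$, the natural partition is equitable, so that the whole claim collapses to a $2\times2$ computation. Precisely: the partition $\left\{V(K_{(n-t)/2}),V(\overline{K_{(n+t)/2}})\right\}$ is equitable, so by Theorem~\ref{quotient} we have $\rho(A_{\overline{\kappa}}(G_2))=\rho(Q_2)$, and since ${n\choose2}\cdot\frac{2(n-t)}n=(n-1)(n-t)$ it suffices to prove $\rho\!\left({n\choose2}Q_2\right)<(n-1)(n-t)$, where ${n\choose2}Q_2$ is the matrix already displayed in the proof of Claim~\ref{extremalclaim2}. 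Writing $M={n\choose2}Q_2$, I note that $M$ is non-negative and irreducible (its off-diagonal entries $\frac{n^2-t^2}{4}$ and $\frac{(n-t)^2}{4}$ are positive), so by Theorem~\ref{PF} its Perron root $\rho(M)$ is the larger root of the characteristic polynomial $f_M(\lambda)=\lambda^2-\mathrm{tr}(M)\lambda+\det(M)$. I would then use the elementary fact that for such a matrix and any real $\mu$ one has $\rho(M)<\mu$ if and only if $2\mu>\mathrm{tr}(M)$ and $f_M(\mu)>0$ (i.e. $\mu$ lies to the right of the axis of the upward parabola $f_M$ and is not trapped between its two roots).

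To keep the algebra transparent I would pass to the variables $a=\frac{n-t}2$, $b=\frac{n+t}2$ of Claim~\ref{extremalclaim2} and set $m=a+b-1=n-1$, so that $M=\begin{pmatrix}m(a-1)&ab\\a^2&a(b-1)\end{pmatrix}$ and the target value is $\mu=(n-1)(n-t)=2am$. Using $(a-1)(b-1)=ab-m$ and $m-a=b-1$, a short expansion gives $\mathrm{tr}(M)=2am-a^2-m$ and $\det(M)=a^2b(b-1)-am^2$. The first required inequality is immediate, since $2\mu-\mathrm{tr}(M)=4am-(2am-a^2-m)=2am+a^2+m>0$. For the second, substituting these into $f_M(\mu)$ cancels the $4a^2m^2$ terms and leaves $f_M(\mu)=2a^3m+am^2+a^2b(b-1)=a\bigl(2a^2m+m^2+ab(b-1)\bigr)$, which is strictly positive because $a=\frac{n-t}2\ge1$, $m=n-1\ge1$ and $b\ge1$ make every summand non-negative and the term $am^2$ positive. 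Hence $\rho(M)<\mu$, that is, $\rho(A_{\overline{\kappa}}(G_2))<\frac{2(n-t)}n$.

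I expect essentially no serious obstacle here, in sharp contrast to Claims~\ref{extremalclaim1}, \ref{extremalclaim2} and even \ref{claim:bound1}: no polynomial-positivity estimation over a range of $n$ is needed, and no hypothesis relating $n$ to $t$ is used. The only points requiring care are the bookkeeping for $\mathrm{tr}(M)$ and $\det(M)$ and checking that $M$ is genuinely irreducible so that Theorem~\ref{PF} and the $2\times2$ root criterion apply. The striking factorization $f_M\bigl((n-1)(n-t)\bigr)=a\bigl(2a^2m+m^2+ab(b-1)\bigr)$ ultimately reflects the complete-split structure of $G_2$, in which $\kappa(u,v)=n-1$ for the two vertices lying inside the clique and $\kappa(u,v)=\frac{n-t}2$ for every other pair.
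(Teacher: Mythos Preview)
Your proof is correct. Both you and the paper start from the same $2\times2$ equitable quotient matrix $Q_2$ and reduce the claim to $\rho\bigl(\binom{n}{2}Q_2\bigr)<(n-1)(n-t)$; the difference is purely in how this last inequality is verified. The paper solves the quadratic explicitly, carries the radical through, squares, and then checks positivity of the resulting degree-four polynomial in $n$ by taking three successive derivatives and evaluating at the boundary $n=t+2$. You instead invoke the elementary parabola criterion ($\rho(M)<\mu\iff 2\mu>\operatorname{tr}(M)$ and $f_M(\mu)>0$) and, after the change of variables $a=\tfrac{n-t}{2}$, $b=\tfrac{n+t}{2}$, $m=n-1$, obtain the clean factorisation $f_M(2am)=a\bigl(2a^2m+m^2+ab(b-1)\bigr)$, which is visibly positive. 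Your route is shorter, avoids radicals and calculus, and makes transparent that nothing beyond $n\ge t+2$ (i.e.\ $a\ge1$, automatic from $\alpha'(G)\ge1$) is used, corroborating your closing remark. The algebra checks: with $b=m+1-a$ one indeed gets $\operatorname{tr}(M)=2am-a^2-m$ and $\det(M)=a^2b(b-1)-am^2$, and the $4a^2m^2$ terms cancel in $f_M(2am)$ as you claim.
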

	
	\begin{proof}
		From the quotient matrix given in the proof of Claim \ref{extremalclaim2}, we can compute
		\begin{align*}
			&\rho(A_{\overline{\kappa}}(G_2)) = \frac{1}{\begin{pmatrix}
					n\\2
			\end{pmatrix}}\left(\frac{t}2 - n  - \frac{nt}4 + \frac{3n^2}8 - \frac{t^2}8 + \frac12 + \right.\\&\left.+\frac18\sqrt{5n^4 - 12n^3t - 8n^3 + 6n^2t^2 + 16n^2t + 24n^2 + 4nt^3 - 8nt^2 - 16nt - 32n - 3t^4 + 8t^2 + 16}\vphantom{\frac11}\right).
		\end{align*}
		Therefore
		\begin{align*}
			&\rho(A_{\overline{\kappa}}(G_2))< \frac{2(n-t)}{n}\\
			\Leftrightarrow &\sqrt{5n^4 - 12n^3t - 8n^3 + 6n^2t^2 + 16n^2t + 24n^2 + 4nt^3 - 8nt^2 - 16nt - 32n - 3t^4 + 8t^2 + 16}\\
			<&8\left(\begin{pmatrix}
				n\\2
			\end{pmatrix}\frac{2(n-t)}{n} - \left(\frac{t}2 - n  - \frac{nt}4 + \frac{3n^2}8 - \frac{t^2}8 + \frac12\right)\right) = 5n^2 - 6nt + t^2 + 4t - 4\\
			\Leftrightarrow &0 < 20n^4 - 48n^3t + 8n^3 + 40n^2t^2 + 24n^2t - 64n^2 - 16nt^3 - 40nt^2 + 64nt + 32n + 4t^4 + 8t^3 - 32t.
		\end{align*}
		
		Consider the right hand side of the above inequality a function of $n$, since $n \ge t + 2$
		\begin{align*}
			&RHS'(n) = 80n^3 - 144n^2t + 24n^2 + 80nt^2 + 48nt - 128n - 16t^3 - 40t^2 + 64t + 32,\\
			&RHS''(n) = 240n^2 - 288nt + 48n + 80t^2 + 48t - 128,\\
			&RHS^{(3}(n) = 480n - 288t + 48 > 0.
		\end{align*}
		\begin{align*}
			\Rightarrow & RHS''(n) \ge RHS''(t + 2) = 32t^2 + 480t + 928 > 0,\\
			&RHS'(n) \ge RHS'(t + 2) = 96t^2 + 512t + 512 > 0,\\
			&RHS(n) \ge RHS(t + 2) = 128t^2 + 320t + 192 > 0
		\end{align*}
		thus, the proof of Claim \ref{claim:bound2} is given.
	\end{proof}
	
	The complete proof of Theorem \ref{main} follows directly from Claims \ref{claim1}, \ref{mainclaim}, \ref{extremalclaim1}, \ref{extremalclaim2}, \ref{claim:bound1} and \ref{claim:bound2}. From Claim \ref{claim1}, equality is achieved when $G$ is a complete graph and $\alpha'(G) = \frac{n-1}{2}$, implying $n$ is odd. In the remaining case, the extremal graphs are $G_1$ and $G_2$ whose spectral radii are strictly smaller than $\frac{4\alpha'(G)}{n}$. Thus the proof of Theorem \ref{main} is done.\qquad \qquad \qquad $\qedsymbol$

	\section{Proof of theorem \ref{bipartitethm}}
	
	\quad\\
	\textit{Proof of theorem \ref{bipartitethm}.} We now turn our attention to bipartite graphs, in which we can improve the upper bound in Theorem \ref{main}.
	
	\begin{claim}\label{claim:bipartite1}
		$\rho(A_{\overline{\kappa}}(K_{k,n-k})) \le \frac{(n-k)(4k - 2)}{n(n-1)}$.
	\end{claim}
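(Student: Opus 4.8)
Assume without loss of generality that $k\le n-k$, write $A$ for the part of size $k$ and $B$ for the part of size $n-k$, and order the vertices with those of $A$ first. The first thing I would do is nail down $\kappa(u,v)$ for every pair, via Menger's theorem: if $u,v\in A$ then $d(u)=d(v)=n-k$ and the $n-k$ paths $ubv$ ($b\in B$) are internally disjoint, so $\kappa(u,v)=n-k$; if $u,v\in B$ then symmetrically $\kappa(u,v)=k$; and if $u\in A$, $v\in B$ (hence $uv\in E(G)$) then $\kappa(u,v)\le\min\{d(u),d(v)\}=k$, while the edge $uv$ together with the $k-1$ paths $u\,b_i\,a_i\,v$ on distinct $a_i\in A\setminus\{u\}$, $b_i\in B\setminus\{v\}$ (possible as $k-1\le n-k-1$) gives $\kappa(u,v)=k$. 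Thus
\[{n\choose 2}A_{\overline{\kappa}}(G)=\begin{pmatrix}(n-k)(J_k-I_k)&k\,J\\ k\,J&k(J_{n-k}-I_{n-k})\end{pmatrix},\]
so the (unnormalized) row sums are $(n-k)(k-1)+k(n-k)=(n-k)(2k-1)$ on $A$ and $k^2+k(n-k-1)=k(n-1)$ on $B$, and $(n-k)(2k-1)-k(n-1)=(k-1)(n-2k)\ge 0$.

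Next I would apply Theorem~\ref{Gctheo}: since $A_{\overline{\kappa}}(G)$ has zero diagonal, every eigenvalue of ${n\choose 2}A_{\overline{\kappa}}(G)$ lies in a disc of radius equal to its row sum, so $\rho\big({n\choose 2}A_{\overline{\kappa}}(G)\big)\le(n-k)(2k-1)$, i.e.
\[\rho\big(A_{\overline{\kappa}}(G)\big)\le\frac{(n-k)(2k-1)}{{n\choose 2}}=\frac{(n-k)(4k-2)}{n(n-1)}.\]
(Equivalently, the right‑hand side is exactly the transmission $T(G)$, so this is Theorem~\ref{basic}.) If instead $k>n-k$, the same computation with $A,B$ interchanged gives $\rho\big(A_{\overline{\kappa}}(G)\big)\le\frac{k(4(n-k)-2)}{n(n-1)}$, and $k\ge n-k$ makes this at most $\frac{(n-k)(4k-2)}{n(n-1)}$; hence the bound holds for all $k$.

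For the equality case (needed when this claim is used inside the proof of Theorem~\ref{bipartitethm}), I would note that the color‑class partition is equitable, so by Theorem~\ref{quotient} $\rho(A_{\overline{\kappa}}(G))=\rho(Q)$ with ${n\choose 2}Q=\left(\begin{smallmatrix}(n-k)(k-1)&k(n-k)\\k^2&k(n-k-1)\end{smallmatrix}\right)$; this $2\times2$ nonnegative matrix has real eigenvalues, and setting $L=(n-k)(2k-1)$ one computes $\operatorname{tr}\big({n\choose 2}Q\big)=L-k<2L$ and $\det\big(LI-{n\choose 2}Q\big)=k(n-k)(k-1)(n-2k)\ge 0$, which forces the Perron root to be at most $L$, with equality precisely when $k=1$ or $2k=n$. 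The only genuinely delicate point is the Menger computation in the first paragraph — in particular the adjacent‑vertex case — everything afterward being the one‑line inequality $(k-1)(n-2k)\ge 0$ and routine $2\times2$ algebra; I do not anticipate a real obstacle.
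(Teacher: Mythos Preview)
Your proof is correct and takes a genuinely different, more elementary route than the paper. The paper forms the same $2\times 2$ quotient matrix, computes its largest eigenvalue explicitly as
\[
\rho\Bigl(\tbinom{n}{2}A_{\overline{\kappa}}(K_{k,n-k})\Bigr)=\tfrac12\Bigl(\sqrt{n^2+4nk^3-4nk-4k^4+4k^2}+2nk-n-2k^2\Bigr),
\]
and then bounds this expression by $(n-k)(2k-1)$ via the identity $\sqrt{X}-n=\frac{X-n^2}{\sqrt{X}+n}$ together with the estimate $\sqrt{X}+n\ge 2k^2+2k$ (valid because $n\ge 2k$). You instead bypass the eigenvalue computation entirely: once you pin down the three values of $\kappa(u,v)$ via Menger, the maximum row sum of $\tbinom{n}{2}A_{\overline{\kappa}}$ is exactly $(n-k)(2k-1)$, and Gershgorin (equivalently the transmission bound of Theorem~\ref{basic}) finishes the claim in one line. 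Your determinant computation $\det\bigl(LI-\tbinom{n}{2}Q\bigr)=k(n-k)(k-1)(n-2k)$ also recovers the equality cases $k=1$ or $n=2k$ cleanly. The only thing the paper's approach buys that yours does not is the \emph{exact} closed form for $\rho(A_{\overline{\kappa}}(K_{k,n-k}))$, which the paper later reuses in Case~2 of Claim~\ref{claim:bipartite2} to prove the reverse inequality $\rho(A_{\overline{\kappa}}(K_{k,n-k}))\ge \tfrac{2k}{n}$; for the present claim alone, your argument is strictly simpler.
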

	\begin{proof}
		Without loss of generality, suppose that $k\le n - k$ or $2k \le n$. Partition the vertices according to the partite sets, we have the quotient matrix
		
		\[\begin{pmatrix}
			n\\2
		\end{pmatrix}\tilde{Q} =\begin{pmatrix}
			(n-k)(k-1)&k(n-k)\\
			k^2 & k(n-k-1)
		\end{pmatrix}.\]
		
		The spectral radius of $A_{\overline{\kappa}}(K_{k,n-k})$ can be computed
		
		\[\rho(A_{\overline{\kappa}}(K_{k,n-k})) = \frac{1}{n(n-1)}\left(\sqrt{n^2 + 4nk^3 - 4nk-4k^4+4k^2} + 2nk -n - 2k^2\right)\]
		and analyzed as follows
		
		\begin{align*}
			&\sqrt{n^2 + 4nk^3 - 4nk-4k^4+4k^2} + 2nk -n - 2k^2\\
			=& \frac{4nk^3 - 4nk - 4k^4+4k^2}{\sqrt{(n-2k)^2 + 4nk^3 -4k^4} + n} + 2nk - 2k^2\\
			\le& \frac{4nk^3 - 4nk - 4k^4+4k^2}{\sqrt{8k^4 -4k^4} + 2k} + 2nk - 2k^2 \hspace{3cm}{(*)}\\
			=& \frac{4nk(k-1)(k+1) - 4k^2(k-1)(k+1)}{2k^2 + 2k} + 2nk - 2k^2\\
			=& 2n(k-1) - 2k(k-1) + 2nk - 2k^2\\
			=& 2(n-k)(k-1) + 2k(n-k)\\
			=& (n-k)(4k-2),
		\end{align*}
		hence, Claim \ref{claim:bipartite1} is proved.
	\end{proof}
	
	The next step is to show that $K_{k, n-k}$ is the extremal graph with the maximum spectral radius among $n$-vertex connected bipartite graphs with the matching number $k$.
	
	\begin{claim}\label{claim:bipartite2}
		Let $G$ be an $n$-vertex connected bipartite graph with the matching number $k$. Then, $\rho(A_{\overline{\kappa}}(G))\le \rho(A_{\overline{\kappa}}(K_{k, n -k}))$.
	\end{claim}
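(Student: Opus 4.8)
\medskip
\noindent\textit{Proof proposal for Claim~\ref{claim:bipartite2}.}
The plan is to enlarge $G$, without decreasing $\rho(A_{\overline{\kappa}})$, to a graph $G^{*}$ whose average connectivity matrix is entrywise dominated — after relabelling the vertices — by $A_{\overline{\kappa}}(K_{k,n-k})$, and then to finish with Theorem~\ref{PerronCor}. First I would invoke K\"onig's theorem (see~\cite{W}): the bipartite graph $G$ has a vertex cover $C$ with $|C|=\alpha'(G)=k$; moreover each side of the bipartition of $G$ contains at least $k$ vertices, since it meets a maximum matching in $k$ distinct vertices. Let $G^{*}$ be obtained from $G$ by adding, for every $c\in C$, all edges joining $c$ to the side of the bipartition not containing $c$. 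Then $G^{*}$ is bipartite and connected, $C$ is still a vertex cover of $G^{*}$, and $\alpha'(G)\le\alpha'(G^{*})\le|C|=k$ forces $\alpha'(G^{*})=k$. Since $G\subseteq G^{*}$ we have $\kappa_G(u,v)\le\kappa_{G^{*}}(u,v)$ for all $u,v$, i.e.\ $A_{\overline{\kappa}}(G)\le A_{\overline{\kappa}}(G^{*})$ entrywise, so Theorem~\ref{PerronCor}, applied with the nonnegative and irreducible matrix $A_{\overline{\kappa}}(G^{*})$ ($G^{*}$ being connected), gives $\rho(A_{\overline{\kappa}}(G))\le\rho(A_{\overline{\kappa}}(G^{*}))$.

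Next I would bound every connectivity of $G^{*}$. Put $I=V(G^{*})\setminus C$; since no edge was added inside $I$ and $C$ is a vertex cover, $I$ is independent and every neighbour of a vertex of $I$ lies in $C$, so $\deg_{G^{*}}(v)\le|C|=k$ for all $v\in I$, whence $\kappa_{G^{*}}(u,v)\le\min\{\deg_{G^{*}}u,\deg_{G^{*}}v\}\le k$ whenever $u\in I$ or $v\in I$. Each $c\in C$ has degree in $G^{*}$ equal to the size of the opposite side, which is at most $n-k$ because both sides have at least $k$ vertices, so $\kappa_{G^{*}}(u,v)\le n-k$ for all $u,v\in C$. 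On the other hand, a direct count in $K_{k,n-k}$ — with $A$ the part of size $k$ and $B$ the part of size $n-k$, where $k\le n-k$ — shows $\kappa(u,v)=n-k$ for $u,v\in A$ and $\kappa(u,v)=k$ for every other pair of distinct vertices (these values also underlie the quotient matrix used in the proof of Claim~\ref{claim:bipartite1}). Choosing a bijection $\phi\colon V(G^{*})\to V(K_{k,n-k})$ with $\phi(C)=A$ and $\phi(I)=B$, the estimates just made say precisely that $\bigl(A_{\overline{\kappa}}(G^{*})\bigr)_{uv}\le\bigl(A_{\overline{\kappa}}(K_{k,n-k})\bigr)_{\phi(u)\phi(v)}$ for all $u,v$, i.e.\ $A_{\overline{\kappa}}(G^{*})\le P^{T}A_{\overline{\kappa}}(K_{k,n-k})P$ where $P$ is the permutation matrix of $\phi$. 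A second application of Theorem~\ref{PerronCor}, now to the nonnegative and irreducible matrix $P^{T}A_{\overline{\kappa}}(K_{k,n-k})P$, yields $\rho(A_{\overline{\kappa}}(G^{*}))\le\rho(A_{\overline{\kappa}}(K_{k,n-k}))$, and chaining the two inequalities proves the claim. For the equality statement needed in Theorem~\ref{bipartitethm}, equality forces equality in both applications of Theorem~\ref{PerronCor}, hence $A_{\overline{\kappa}}(G)=A_{\overline{\kappa}}(G^{*})=A_{\overline{\kappa}}(K_{k,n-k})$ up to $\phi$; unwinding this — each cover vertex of $G^{*}$ must then have degree exactly $n-k$, and $G$ must reproduce all $\kappa$-values of $K_{k,n-k}$ — gives $G=K_{k,n-k}$.

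No single step is deep, but the one that must be got right is the degree bound on the cover: after the edge additions every $c\in C$ has degree at most $n-k$, so the potentially large connectivities $\kappa_{G^{*}}(u,v)$ with $u,v\in C$ never exceed $n-k$, which is precisely the value attained among the $k$ vertices of the small side of $K_{k,n-k}$; combined with $\deg_{G^{*}}(v)\le k$ for $v\notin C$ this is exactly what the entrywise domination requires. One must also be careful to add only edges meeting $C$, so that $\alpha'$ is not pushed above $k$; and the equality analysis relies on the sharp form of the Perron--Frobenius comparison (that $A\ge|B|$ with $A$ irreducible and $\rho(A)=\rho(B)$ implies $A=B$ when $B\ge0$).
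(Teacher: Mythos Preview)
Your argument for the inequality is correct and is a genuinely different, much shorter route than the paper's. The paper proceeds via Hall's theorem: when $X$ is not saturated it finds a defect set $S\subseteq X$, passes to the ``saturated'' bipartite graph $G^{*}$ on the four blocks $S$, $N(S)$, $X\setminus S$, $Y\setminus N(S)$, computes its $4\times4$ quotient matrix, and then splits into two cases --- in one it repeatedly moves vertices from $S$ to $Y\setminus N(S)$, comparing Perron vectors of successive quotient matrices, and in the other it bounds all four row sums by $k(n-1)$ and shows $\rho(A_{\overline{\kappa}}(K_{k,n-k}))\ge 2k/n$. Your proof bypasses all of this with K\"onig's theorem: the single observation that a minimum vertex cover $C$ has $|C|=k$, that every $v\notin C$ has all its neighbours in $C$ (hence degree at most $k$), and that every $c\in C$ has degree at most $n-k$ (because each bipartition class has size at least $k$), immediately gives the entrywise inequality $A_{\overline{\kappa}}(G)\le P^{T}A_{\overline{\kappa}}(K_{k,n-k})P$ once $C$ is mapped to the small side of $K_{k,n-k}$. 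One application of Theorem~\ref{PerronCor} finishes. In fact your intermediate graph $G^{*}$ is not even needed: the degree bounds $\deg_G(v)\le k$ for $v\in I$ and $\deg_G(c)\le n-k$ for $c\in C$ already hold in $G$ itself, so you can compare $A_{\overline{\kappa}}(G)$ directly to $A_{\overline{\kappa}}(K_{k,n-k})$.

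Two small remarks. First, your equality sketch (``unwinding this \ldots\ gives $G=K_{k,n-k}$'') is not fully justified as written: from $A_{\overline{\kappa}}(G)=P^{T}A_{\overline{\kappa}}(K_{k,n-k})P$ you must still argue that equal local connectivities force $G$ to be $K_{k,n-k}$, and this requires handling the case where $C$ straddles both partite classes. Since Claim~\ref{claim:bipartite2} asserts only the inequality, this does not affect the claim, but if you want to recover the equality statement of Theorem~\ref{bipartitethm} along these lines you should spell it out (for instance: equality forces $\kappa_G(u,v)=n-k$ for $u,v\in C$, hence every $c\in C$ has degree exactly $n-k$, hence both partite classes have size exactly $k$ and $n-k$; then $C$ must lie entirely in the size-$k$ class, and matching the $\kappa$-values on $I$ forces all edges of $K_{k,n-k}$ to be present). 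Second, what your approach buys is a one-line conceptual reason for the bound --- map a minimum vertex cover to the small side --- whereas the paper's approach yields finer structural information (the explicit quotient matrices and extremal graph deformations) that could be useful for sharper estimates.
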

	\begin{proof}
		Denote by $X, Y$ the two partite sets of $G$, and assume that $|X|\le |Y|$. If $G$ has a matching that covers every vertex of $X$, then $|X| = k$ and $E(G)\subset E(K_{k,n-k})$. By Theorem~\ref{PerronCor}, $\rho(A_{\overline{\kappa}}((G))\le\rho(A_{\overline{\kappa}}(K_{k,n-k}))$.
		
		If $G$ does not have a matching that covers every vertex of $X$, by Hall's marriage theorem, there exists a subset $S$ of $X$ such that $\alpha'(G) = |X| - |S| + |N(S)| = k$, where $N(S)$ denotes the neighbors of $S$.
		
		Consider the graph $G^*$ (see Figure \ref{fig:bipartite})  constructed by adding every edge possible between $S$ and $N(S)$, $X-S$ and $N(S)$, $X-S$ and $Y - N(S)$.
		
		\begin{figure}[!h]
			\centering
			\begin{tikzpicture}
				\draw (0,0) ellipse (1.5cm and 0.8cm);
				\node at (0,0) (S) {$S$};
				\draw (0,-5) ellipse (1.5cm and 0.8cm);
				\node at (0,-5) (NS) {$N(S)$};
				\draw (5,0) ellipse (1.5cm and 0.8cm);
				\node at (5,0) (X_S) {$X-S$};
				\draw (5,-5) ellipse (1.5cm and 0.8cm);
				\node at (5,-5) (Y_NS) {$Y-N(S)$};
				\draw[-] (-0.5, -0.5) -- (-0.5, -4.5);
				\draw[-] (-0.5, -0.5) -- (0, -4.5);
				\draw[-] (-0.5, -0.5) -- (0.5, -4.5);
				\draw[-] (0, -0.5) -- (0, -4.5);
				\draw[-] (0, -0.5) -- (0.5, -4.5);
				\draw[-] (0, -0.5) -- (-0.5, -4.5);
				\draw[-] (0.5, -0.5) -- (0.5, -4.5);
				\draw[-] (0.5, -0.5) -- (0, -4.5);
				\draw[-] (0.5, -0.5) -- (-0.5, -4.5);
				
				\draw[-] (5, -0.5) -- (-0.5, -4.5);
				\draw[-] (5, -0.5) -- (0.5, -4.5);
				\draw[-] (5, -0.5) -- (0, -4.5);
				\draw[-] (5.5, -0.5) -- (0, -4.5);
				\draw[-] (5.5, -0.5) -- (0.5, -4.5);
				\draw[-] (5.5, -0.5) -- (-0.5, -4.5);
				\draw[-] (4.5, -0.5) -- (0.5, -4.5);
				\draw[-] (4.5, -0.5) -- (0, -4.5);
				\draw[-] (4.5, -0.5) -- (-0.5, -4.5);
				
				\draw[-] (5, -0.5) -- (4.5, -4.5);
				\draw[-] (5, -0.5) -- (5.5, -4.5);
				\draw[-] (5, -0.5) -- (5, -4.5);
				\draw[-] (5.5, -0.5) -- (4.5, -4.5);
				\draw[-] (5.5, -0.5) -- (5.5, -4.5);
				\draw[-] (5.5, -0.5) -- (5, -4.5);
				\draw[-] (4.5, -0.5) -- (4.5, -4.5);
				\draw[-] (4.5, -0.5) -- (5.5, -4.5);
				\draw[-] (4.5, -0.5) -- (5, -4.5);
			\end{tikzpicture}
			\caption{The bipartite graph $G^*$}
			\label{fig:bipartite}
		\end{figure}
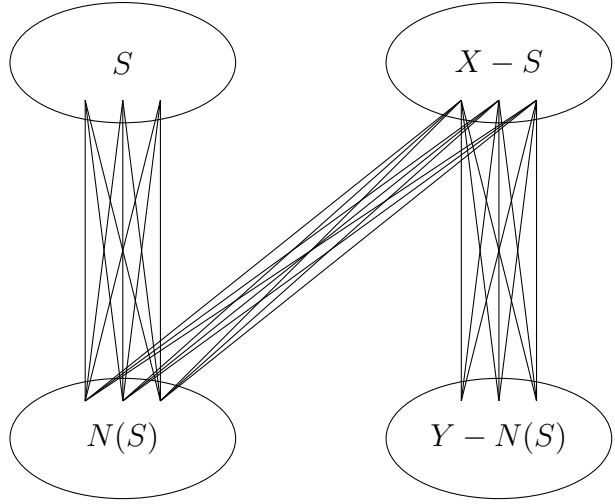
		
		By construction, $\alpha'(G^*)  = \alpha'(G) = k$ and $\rho(A_{\overline{\kappa}}(G^*))\ge \rho(A_{\overline{\kappa}}(G))$. Let $|X| = x, |Y| = y, |S| = s$ and $|N(S)| = n_s$, the quotient matrix of $G^*$ corresponding to the partition $\{S, N(S), X - S, Y - N(S)\}$ is then
		
		\[\hspace*{-0.5cm}\begin{pmatrix}
			n\\2
		\end{pmatrix}\tilde{Q_1} =\begin{pmatrix}
			(s-1)n_s&n_s^2&(x-s)n_s&(y-n_s)\min\{n_s,x-s\}\\
			sn_s & (n_s - 1)x & (x-s)(n_s+x-s-1) & (y-n_s)(x-s)\\
			sn_s & n_s(x-s+n_s-1) & (x-s-1)y & (y-n_s)(x-s)\\
			s\min\{n_s,x-s\}&n_s(x-s)&(x-s)^2&(y-n_s-1)(x-s)
		\end{pmatrix}.\]
		
		Since $\tilde{Q_1}$ depends on the value of $\min\{n_s, x-s\}$, we consider two cases.
		
		\quad
		
		\noindent\textit{Case 1:} If $n_s \le x-s$, then
		\[\begin{pmatrix}
			n\\2
		\end{pmatrix}\tilde{Q_1} = \begin{pmatrix}
			(s-1)n_s&n_s^2&(x-s)n_s&(y-n_s)n_s\\
			sn_s & (n_s - 1)x & (x-s)(n_s+x-s-1) & (y-n_s)(x-s)\\
			sn_s & n_s(x-s+n_s-1) & (x-s-1)y & (y-n_s)(x-s)\\
			sn_s&n_s(x-s)&(x-s)^2&(y-n_s-1)(x-s)
		\end{pmatrix}.\]
		
		Consider the graph $G^{**}$ obtained by moving one vertex from $S$ to $Y-N(S)$ and its quotient matrix
		
		\[\begin{pmatrix}
			n\\2
		\end{pmatrix}\tilde{Q_2} =\begin{pmatrix}
			(s-2)n_s&n_s^2&(x-s)n_s&(y-n_s+1)n_s\\
			(s-1)n_s & (n_s - 1)(x-1) & (x-s)(n_s+x-s-1) & (y-n_s+1)(x-s)\\
			(s-1)n_s & n_s(x-s+n_s-1) & (x-s-1)(y+1) & (y-n_s+1)(x-s)\\
			(s-1)n_s&n_s(x-s)&(x-s)^2&(y-n_s)(x-s)
		\end{pmatrix},\]
		then
		\[\begin{pmatrix}
			n\\2
		\end{pmatrix}(\tilde{Q_2} - \tilde{Q_1}) = \begin{pmatrix}
			-n_s&0&0&n_s\\
			-n_s&-(n_s-1)&0&x-s\\
			-n_s&0&x-s-1&x-s\\
			-n_s&0&0&x-s
		\end{pmatrix}.\]
		
		Let $u = [u_1, u_2, u_3, u_4]^T > 0$ be the Perron vector corresponding to the spectral radius $\rho$ of $\begin{pmatrix}
			n\\2
		\end{pmatrix}\tilde{Q_1}$. We compare the entries of $u$, namely $u_1$ and $u_4$. If $n_s = x-s$ then
		\[(s-1)n_su_1 + n_s^2u_2+n_s^2u_3 + (y-n_s)n_su_4 = \rho u_1,\]
		\[sn_su_1 + n_s^2u_2+n_s^2u_3 + (y-n_s-1)n_su_4 = \rho  u_4\]
		
		\[\Rightarrow\rho(u_1 - u_4) = -n_s(u_1 - u_4) \Rightarrow u_1 = u_4.\]
		Else if $n_s\le x-s-1$ then
		\begin{align*}
			(y-n_s-1)(x-s) &\ge (y-n_s-1)(n_s+1)\\
			&= (y-n_s)n_s + (y-n_s-1) - n_s\\
			&\ge (y-n_s)n_s
		\end{align*}
		(since $x\le y, n_s\le s \Rightarrow y - n_s \ge x-s\ge n_s + 1$). From here, we have
		\begin{align*}
			&\rho u_1 = (s-1)n_su_1 +n_s^2u_2 + (x-s)n_su_3 + (y-n_s)n_su_4\\
			&\le sn_su_1 + n_s(x-s)u_2 + (x-s)^2u_3 + (y-n_s-1)(x-s)u_4 = \rho u_4,
		\end{align*}
		therefore $u_1 \le u_4$. Next, we compare $u_2$ and $u_3$.
		
		\begin{align*}
			&\rho(u_2 - u_3) = [(n_s-1)x - n_s(x-s+n_s-1)]u_2\\
			&+ [(x-s)(n_s+x-s-1) - (x-s-1)y]u_3\\
			\Rightarrow &\frac{u_2}{u_3} = \frac{\rho - (x-s-1)y+(x-s)(n_s+x-s-1)}{\rho - (n_s-1)x + n_s(n_s+x-s-1)}.
		\end{align*}
		
		Subtracting the numerator by the denominator gives
		
		\begin{align*}
			&(x-s)(n_s+x-s-1) - (x-s-1)y + (n_s-1)x - n_s(n_s+x-s-1)\\
			=&(x-s - n_s)(n_s+x-s-1) - (x-s-1)y + (n_s-1)x\\
			=&(x-s - n_s)(n_s+x-s-1) - (x-s-n_s)y - (n_s-1)y + (n_s-1)x\\
			=&(x-s-n_s)(n_s+x-s-y-1) - (n_s-1)(y-x) \le 0,
		\end{align*}
		therefore $u_2 \le u_3$.
		
		Next, we prove that $\begin{pmatrix}
			n\\2
		\end{pmatrix}u^T(\tilde{Q_2}-\tilde{Q_1})u\ge 0$. Indeed,
		
		\begin{align*}
			\begin{pmatrix}
				n\\2
			\end{pmatrix}u^T(\tilde{Q_2}-\tilde{Q_1})u = &-n_su_1^2 - n_su_1u_2 - n_su_1u_3-(n_s-1)u_2^2 + (x-s)u_2u_4 + (x-s-1)u_3^2\\
			&+ (x-s)u_3u_4+ (x-s)u_4^2.
		\end{align*}
		Combining $u_1\le u_4, u_2\le u_3$ with the assumption $n_s\le x-s$, we have
		
		\begin{align*}
			&n_su_1^2\le (x-s)u_4^2,\\
			&n_su_1u_2\le (x-s)u_2u_4,\\
			&n_su_1u_3\le (x-s)u_3u_4,\\
			&(n_s-1)u_2^2\le (x-s-1)u_3^2
		\end{align*}
		therefore, $u^T(\tilde{Q_2}-\tilde{Q_1})u\ge 0$ and \[\rho\left(\begin{pmatrix}
			n\\2
		\end{pmatrix}\tilde{Q_2}\right) \ge \begin{pmatrix}
			n\\2
		\end{pmatrix}u^T\tilde{Q_2}u \ge \begin{pmatrix}
			n\\2
		\end{pmatrix}u^T\tilde{Q_1}u = \rho\left(\begin{pmatrix}
			n\\2
		\end{pmatrix}\tilde{Q_1}\right).\]
		
		Each time we move a vertex from $S$ to $Y-N(S)$, we increase the spectral radius, and we can keep doing so without changing the matching number so long as $|S|\ge|N(S)|$ after the vertex is moved. When $|S| = |N(S)|$, clearly $|X| = k$ and $E(G) \subset E(K_{k,n-k})$, so it follows that $\rho(A_{\overline{\kappa}}(G))\le\rho(A_{\overline{\kappa}}(K_{k,n-k}))$.
		
		\quad
		
		\noindent\textit{Case 2:} If $n_s > x - s$, the quotient matrix of $G^*$ now becomes
		
		\[\begin{pmatrix}
			n\\2
		\end{pmatrix}\tilde{Q_1} = \begin{pmatrix}
			(s-1)n_s&n_s^2&(x-s)n_s&(y-n_s)(x-s)\\
			sn_s & (n_s - 1)x & (x-s)(n_s+x-s-1) & (y-n_s)(x-s)\\
			sn_s & n_s(x-s+n_s-1) & (x-s-1)y & (y-n_s)(x-s)\\
			s(x-s)&n_s(x-s)&(x-s)^2&(y-n_s-1)(x-s)
		\end{pmatrix}.\]
		
		By Theorem \ref{quotient} and Theorem \ref{Gctheo}, the spectral radius of $\tilde{Q_1}$ is bounded by the largest row sum of $\tilde{Q_1}$. We consider each row sum:
		
		\begin{enumerate}
			
			\item $(s-1)n_s+n_s^2+(x-s)n_s+(y-n_s)(x-s)$
			
			$\le (s-1)n_s+n_s^2+(x-s)n_s+(y-n_s)n_s$
			
			$ = n_s(s-1+n_s+x-s+y-n_s)$
			
			$ = n_s(n-1) \le k(n-1)$.
			
			\item $sn_s+(n_s-1)x+(x-s)(n_s+x-s-1)+(y-n_s)(x-s)$
			
			$= sn_s+(n_s-1)x+(x-s)(y+x-s-1)$
			
			$\le sn_s+(n_s-1)x+(x-s)(n-1)$
			
			$\le (y-1)n_s+n_sx+(x-s)(n-1)$ (since $G$ is connected, $x-s \ge 1 \Rightarrow y - s\ge 1$)
			
			$= n_s(n-1) + (x-s)(n-1)$
			
			$= k(n-1)$.
			
			\item $sn_s+n_s(x-s+n_s-1)+(x-s-1)y+(y-n_s)(x-s)$
			
			$\le sn_s+n_s(x-s+n_s-1) + (x-s-1)y+(y-n_s)n_s$
			
			$= n_s(s +x-s+n_s-1 + y - n_s) + (x-s-1)y$
			
			$= n_s(n-1) + (x-s-1)y$
			
			$\le n_s(n-1) + (x-s)(n-1)$
			
			$= k (n-1)$.
			
			\item $s(x-s) + n_s(x-s) + (x-s)^2 + (y-n_s-1)(x-s)$
			
			$= (x-s)(s+n_s+x-s+y-n_s-1)$
			
			$= (x-s)(n-1) \le k(n-1)$.
		\end{enumerate}
		
		Hence, all row sums of $\tilde{Q_1}$ are bounded by $\frac{2k}{n}$, which we can prove is no greater than $\rho(A_{\overline{\kappa}}(K_{k,n-k}))$. Consider that
		
		\begin{align*}
			\rho(A_{\overline{\kappa}}(K_{k,n-k})) &= \frac{1}{n(n-1)}\left(\sqrt{n^2 + 4nk^3 - 4nk-4k^4+4k^2} + 2nk -n - 2k^2\right)\\
			&= \frac{1}{n(n-1)}\left(\sqrt{n^2 + 4nk^3 - 4nk-4k^4+4k^2} + 2(n-1)k - n - 2k^2 + 2k\right)\\
			&= \frac{2k}{n} + \frac{1}{n(n-1)}\left(\sqrt{n^2 + 4nk^3 - 4nk-4k^4+4k^2}- n - 2k^2 + 2k\right),
		\end{align*}
		therefore $\rho(A_{\overline{\kappa}}(K_{k,n-k})) \ge \frac{2k}{n}$ if $\sqrt{n^2 + 4nk^3 - 4nk-4k^4+4k^2}- n - 2k^2 + 2k \ge 0$. Consider the inequality
		
		\begin{align*}
			&\sqrt{n^2 + 4nk^3 - 4nk-4k^4+4k^2}\ge n-2k+2k^2\\
			\Leftrightarrow&n^2 + 4nk^3 - 4nk-4k^4+4k^2 \ge n^2 + 4k^2 + 4k^4 - 4nk + 4nk^2 - 8k^3\\
			\Leftrightarrow&4nk^3 - 8k^4\ge 4nk^2 - 8k^3\\
			\Leftrightarrow&(k^3 - k^2)(4n - 8k)\ge 0
		\end{align*}
		and since $k\ge 1$ and $n\ge 2k$, the inequality is true. The proof of Claim \ref{claim:bipartite2} is now complete.
		
	\end{proof}
	
	Combining Claim \ref{claim:bipartite1} and Claim \ref{claim:bipartite2}, we have the complete proof of Theorem \ref{bipartitethm}. Equality holds only when $G\cong K _{k,n-k}$ and $k = n - k$ because in order to have equality in $(*)$, we must have $n = 2k$. \hspace{13.6cm} $\qedsymbol$

\end{document}